\providecommand{\tabularnewline}{\\}
\numberwithin{equation}{section}
\numberwithin{figure}{section}
\theoremstyle{plain}
\newtheorem{thm}{\protect\theoremname}
  \theoremstyle{plain}
  \newtheorem{prop}[thm]{\protect\propositionname}
  \theoremstyle{plain}
  \newtheorem{cor}[thm]{\protect\corollaryname}
\DeclareMathOperator{\Mat}{Mat}
\DeclareMathOperator{\pk}{pk}
\DeclareMathOperator{\rpk}{rpk}
\DeclareMathOperator{\lrpk}{lrpk}
\DeclareMathOperator{\br}{br}
\DeclareMathOperator{\udr}{udr}
\DeclareMathOperator{\des}{des}
\DeclareMathOperator{\altdes}{altdes}
\DeclareMathOperator{\st}{st}
\DeclareMathOperator{\dasc}{dasc}
\DeclareMathOperator{\rdasc}{rdasc}
\DeclareMathOperator{\lrdasc}{lrdasc}
\DeclareMathOperator{\as}{as}
\DeclareMathOperator{\sech}{sech}
  \providecommand{\corollaryname}{Corollary}
  \providecommand{\propositionname}{Proposition}
\providecommand{\theoremname}{Theorem}
\begin{document}
\global\long\def\Mat{\operatorname{Mat}}

\global\long\def\pk{\operatorname{pk}}

\global\long\def\rpk{\operatorname{rpk}}

\global\long\def\lrpk{\operatorname{lrpk}}

\global\long\def\br{\operatorname{br}}

\global\long\def\udr{\operatorname{udr}}

\global\long\def\des{\operatorname{des}}

\global\long\def\altdes{\operatorname{altdes}}

\global\long\def\st{\operatorname{st}}

\global\long\def\sech{\operatorname{sech}}

\global\long\def\dasc{\operatorname{dasc}}

\global\long\def\rdasc{\operatorname{rdasc}}

\global\long\def\lrdasc{\operatorname{lrdasc}}

\global\long\def\as{\operatorname{as}}

\title{Counting permutations by runs}

\author{Yan Zhuang\\
Department of Mathematics\\
Brandeis University%
\thanks{MS 050, Waltham, MA 02453%
}\\
\texttt{\href{mailto:zhuangy@brandeis.edu}{zhuangy@brandeis.edu}}}
\maketitle
\begin{abstract}
In his Ph.D. thesis, Ira Gessel proved a reciprocity formula for noncommutative
symmetric functions which enables one to count words and permutations
with restrictions on the lengths of their increasing runs. We generalize
Gessel's theorem to allow for a much wider variety of restrictions
on increasing run lengths, and use it to complete the enumeration
of permutations with parity restrictions on peaks and valleys, and
to give a systematic method for obtaining generating functions for
permutation statistics that are expressible in terms of increasing
runs. Our methods can also be used to obtain analogous results for
alternating runs in permutations.
\end{abstract}
\textbf{\small{Keywords:}}{\small{ permutations, increasing runs,
peaks, valleys, noncommutative symmetric functions}}{\small \par}

\section{Introduction}

Given a set $A$, let $A^{*}$ be the set of all finite sequences
of elements of $A$, including the empty sequence. We call $A$ an
\textit{alphabet}, the elements of $A$ \textit{letters}, $A^{*}$
the \textit{free monoid} on $A$,%
\footnote{Indeed, $A^{*}$ is a free monoid generated by the letters in $A$,
under the operation of concatenation.%
} and the elements of $A^{*}$ \textit{words}. If we refer to words
without specifying an alphabet, then we take the alphabet to be $\mathbb{P}$,
the set of positive integers. Suppose that our alphabet $A$ is a
totally ordered set, such as $\mathbb{P}$. Then every word in $A^{*}$
can be uniquely decomposed into a sequence of maximal weakly increasing
consecutive subsequences, which we call \textit{increasing runs}.
For example, the increasing runs of $2142353$ are $2$, $14$, $235$,
and $3$. The notion of increasing runs clearly extends to permutations
as well.

As part of his 1977 Ph.D. thesis, Gessel proved a very general identity
involving noncommutative symmetric functions \cite[Theorem 5.2]{gessel-thesis}
from which we can obtain many generating functions for words and permutations
with restrictions on the lengths of their increasing runs. We call
this result the ``run theorem''. In this paper, we present a generalization
of Gessel's run theorem that will enable us to count words and permutations
with an even wider variety of restrictions on increasing run lengths.
Specifically, these restrictions are those which can be encoded by
a special type of digraph that we shall call a ``run network''.

The organization of this paper is as follows. In Section 2, we introduce
some preliminary definitions, state Gessel's run theorem, and present
our generalization of the run theorem. In Sections 3 and 4, we present
two separate applications of the generalized run theorem to permutation
enumeration.

Let $\pi=\pi_{1}\pi_{2}\cdots\pi_{n}$ be a permutation in $\mathfrak{S}_{n}$,
the set of permutations of $[n]=\{1,2,\dots,n\}$ (or more generally,
any sequence of $n$ distinct integers); such permutations are called
$n$-\textit{permutations}. We say that $i$ is a \textit{peak} of
$\pi$ if $\pi_{i-1}<\pi_{i}>\pi_{i+1}$ and that $i$ is a \textit{valley}
of $\pi$ if $\pi_{i-1}>\pi_{i}<\pi_{i+1}$. For example, given $\pi=5736214$,
its peaks are 2 and 4, and its valleys are 3 and 6.

In \cite{Gessel2014}, Gessel and Zhuang found the exponential generating
function 
\begin{equation}
\frac{3\sin\left(\frac{1}{2}x\right)+3\cosh\left(\frac{1}{2}\sqrt{3}x\right)}{3\cos\left(\frac{1}{2}x\right)-\sqrt{3}\sinh\left(\frac{1}{2}\sqrt{3}x\right)}\label{e-gz2014trig}
\end{equation}
for permutations with all peaks odd and all valleys even. Amazingly,
(\ref{e-gz2014trig}) can also be expressed as 
\begin{equation}
\left(1-E_{1}x+E_{3}\frac{x^{3}}{3!}-E_{4}\frac{x^{4}}{4!}+E_{6}\frac{x^{6}}{6!}-E_{7}\frac{x^{7}}{7!}+\cdots\right)^{-1}\label{e-gz2014}
\end{equation}
where the Euler numbers $E_{n}$ are defined by the identity $\sum_{n=0}^{\infty}E_{n}x^{n}/n!=\sec x+\tan x$,
which is reminiscent of David and Barton's \cite{David1962} generating
function 
\begin{equation}
\left(1-x+\frac{x^{3}}{3!}-\frac{x^{4}}{4!}+\frac{x^{6}}{6!}-\frac{x^{7}}{7!}+\cdots\right)^{-1}\label{e-davidbarton}
\end{equation}
for permutations with no increasing runs of length 3 or greater. The
authors explained the similarity between these two generating functions
by applying two different homomorphisms to an identity obtained by
the run theorem, which we review in Section 2.2 after introducing
the run theorem. 

The generating function (\ref{e-gz2014}) also counts permutations
with all peaks even and all valleys odd, because these permutations
are in bijection with permutations with all peaks odd and all valleys
even; the peaks (respectively, valleys) of $\pi=\pi_{1}\pi_{2}\cdots\pi_{n}$
are precisely the valleys (respectively, peaks) of its complement
\[
\pi^{c}=(n+1-\pi_{1})(n+1-\pi_{2})\cdots(n+1-\pi_{n}),
\]
and vice versa. In Section 3, we complete the enumeration of permutations
with parity restrictions on peaks and valleys using the generalized
run theorem; we show that 
\[
(1+x)\frac{2+2\cosh(\sqrt{2}x)+\sqrt{2}x\sinh(\sqrt{2}x)}{2+2\cosh(\sqrt{2}x)-\sqrt{2}x\sinh(\sqrt{2}x)}
\]
is the exponential generating function for permutations with all peaks
and valleys even, and that 
\[
\frac{2+2\cosh(\sqrt{2}x)+\sqrt{2}(2+x)\sinh(\sqrt{2}x)}{2+2\cosh(\sqrt{2}x)-\sqrt{2}x\sinh(\sqrt{2}x)}
\]
is the exponential generating function for permutations with all peaks
and valleys odd. We give several identities involving the terms of
these generating functions which are proved combinatorially.

Finally, in Section 4 we use the generalized run theorem to find formulae
for bivariate generating functions of the form 
\[
\sum_{n=0}^{\infty}\sum_{\pi\in\mathfrak{S}_{n}}t^{\st(\pi)}\frac{x^{n}}{n!}
\]
for many permutation statistics $\st$ which are expressible in terms
of increasing runs. These statistics include variations of peaks and
valleys, ``double ascents'' and ``double descents'', and ``biruns''
and ``up-down runs''. Although equivalent formulae for the generating
functions for some of these statistics have been discovered already
using other methods, the generalized run theorem provides a straightforward,
systematic method for obtaining these generating functions.

\section{The run theorem}

\subsection{Basic definitions}

Here we give the definitions for some of the basic concepts of this
paper, including descent compositions and noncommutative symmetric
functions.

Given a permutation $\pi$ in $\mathfrak{S}_{n}$, we say that $i\in[n-1]$
is a \textit{descent} of $\pi$ if $\pi_{i}>\pi_{i+1}$. Then, increasing
runs of $\pi$ can be characterized as maximal consecutive subsequences
of $\pi$ containing no descents. Let us call an increasing run \textit{short}
if it has length 1, and \textit{long} if it has length at least 2.

The number of increasing runs of a nonempty permutation is one more
than its number of descents; in fact, the lengths of the increasing
runs determine the descents, and vice versa. Given a subset $S\subseteq[n-1]$
with elements $s_{1}<s_{2}<\cdots<s_{j}$, let $C(S)$ be the composition
$(s_{1},s_{2}-s_{1},\dots,s_{j}-s_{j-1},n-s_{j})$ of $n$, and given
a composition $L=(L_{1},L_{2},\dots,L_{k})$, let $D(L)=\{L_{1},L_{1}+L_{2},\dots,L_{1}+\cdots+L_{k-1}\}$
be the corresponding subset of $[n-1]$. Then, $C$ and $D$ are inverse
bijections. If $\pi$ is a permutation with descent set $S$, we call
$C(S)$ the \textit{descent composition} of $\pi$, which give the
lengths of the increasing runs of $\pi$. Applying $D$ to the descent
composition of a permutation gives its descent set.

The definitions and properties of descents, increasing runs, and descent
compositions extend naturally to words in the free monoid on any totally
ordered alphabet such as $[n]$ or $\mathbb{P}$. Note that increasing
runs in words are allowed to be weakly increasing, whereas increasing
runs in permutations are necessarily strictly increasing since no
letters repeat.

Throughout this section, fix a field $F$ of characteristic zero.
(We can take $F$ to be $\mathbb{C}$ in subsequent sections.) Then
$F\langle\langle X_{1},X_{2},\dots\rangle\rangle$ is the $F$-algebra
of formal power series in countably many noncommuting indeterminates
$X_{1},X_{2},\dots$. Consider the elements 
\[
\mathbf{h}_{n}\coloneqq\sum_{i_{1}\leq\cdots\leq i_{n}}X_{i_{1}}X_{i_{2}}\cdots X_{i_{n}}
\]
of $F\langle\langle X_{1},X_{2},\dots\rangle\rangle$, which are noncommutative
versions of the complete symmetric functions $h_{n}$. Note that $\mathbf{h}_{n}$
is the noncommutative generating function for weakly increasing words
in $\mathbb{P}^{*}$ of length $n$. For example, the weakly increasing
word $13449$ is encoded by $X_{1}X_{3}X_{4}^{2}X_{9}$, which appears
in $\mathbf{h}_{5}$. For a composition $L=(L_{1},\dots,L_{k})$,
we let $\mathbf{h}_{L}=\mathbf{h}_{L_{1}}\cdots\mathbf{h}_{L_{k}}$.
Then the $F$-algebra generated by the elements $\mathbf{h}_{L}$
is the algebra \textbf{Sym} of \textit{noncommutative symmetric functions}
with coefficients in $F$, which is a subalgebra of $F\langle\langle X_{1},X_{2},\dots\rangle\rangle$.

Next, let $\mathbf{Sym}_{n}$ be the vector space of noncommutative
symmetric functions homogeneous of degree $n$, so $\mathbf{Sym}_{n}$
is spanned by $\{\mathbf{h}_{L}\}_{L\models n}$ where $L\models n$
indicates that $L$ is a composition of $n$, and \textbf{Sym} is
a graded $F$-algebra with 
\[
\mathbf{Sym}=\bigoplus_{n=0}^{\infty}\mathbf{Sym}_{n}.
\]

For any composition $L=(L_{1},\dots,L_{k})$, we also define 
\[
\mathbf{r}_{L}=\sum_{L}X_{i_{1}}X_{i_{2}}\cdots X_{i_{n}}
\]
where the sum is over all $(i_{1},\dots,i_{n})$ satisfying 
\[
\underset{L_{1}}{\underbrace{i_{1}\leq\cdots\leq i_{L_{1}}}}>\underset{L_{2}}{\underbrace{i_{L_{1}+1}\leq\cdots\leq i_{L_{1}+L_{2}}}}>\cdots>\underset{L_{k}}{\underbrace{i_{L_{1}+\cdots+L_{k-1}+1}\leq\cdots\leq i_{n}}}.
\]
Then, $\mathbf{r}_{L}$ is the noncommutative generating function
for words in $\mathbb{P}^{*}$ with descent composition $L$. These
$\mathbf{r}_{L}$ are noncommutative versions of the ribbon Schur
functions $r_{L}$.

It can be shown that the noncommutative symmetric functions $\mathbf{h}_{L}$
and $\mathbf{r}_{L}$ can be expressed in terms of each other via
inclusion-exclusion, and that both $\{\mathbf{h}_{L}\}_{L\models n}$
and $\{\mathbf{r}_{L}\}_{L\models n}$ are bases for the vector space
$\mathbf{Sym}_{n}$; see \cite[Section 4.1]{Gessel2014}.

For more on the basic theory of noncommutative symmetric functions,
see \cite{ncsf1}.

\subsection{The original run theorem}

For the remainder of this section, let $A$ be a unital $F$-algebra
of characteristic zero. We are now ready to state the run theorem,
a reciprocity formula involving the noncommutative symmetric functions
$\mathbf{h}_{n}$ and $\mathbf{r}_{L}$ defined in the previous subsection.
\begin{thm}[Run Theorem]
\label{t-runs} Let $L=(L_{1},\dots,L_{k})$ be a composition, $\{w_{1},w_{2},\dots\}$
a set of weights from $A$, and write $w_{L}=w_{L_{1}}w_{L_{2}}\cdots w_{L_{k}}$.
Then, the noncommutative generating function for words in $\mathbb{P}^{*}$,
in which each word with descent composition $L$ is weighted $w_{L}$,
is
\[
\sum_{L}w_{L}\mathbf{r}_{L}=\bigg(\sum_{n=0}^{\infty}v_{n}\mathbf{h}_{n}\bigg)^{-1}
\]
where the sum on the left is over all compositions $L$ and the $v_{n}$
are defined by
\begin{align*}
\sum_{n=0}^{\infty}v_{n}x^{n} & =\bigg(\sum_{k=0}^{\infty}w_{k}x^{k}\bigg)^{-1}
\end{align*}
with $w_{0}=1$ (the unity element of $A$).
\end{thm}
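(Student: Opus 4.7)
The plan is to prove the equivalent identity $HR = 1$, where $H = \sum_{n \ge 0} v_n \mathbf{h}_n$ and $R = \sum_L w_L \mathbf{r}_L$; the stated inverse exists since $v_0 = 1$. The engine of the proof is a product formula for $\mathbf{h}_n \mathbf{r}_L$, derived combinatorially.

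For $n \ge 1$ and a nonempty composition $L = (L_1,\dots,L_k)$, I would first establish
\[
\mathbf{h}_n \mathbf{r}_L = \mathbf{r}_{(n, L_1, \dots, L_k)} + \mathbf{r}_{(n + L_1, L_2, \dots, L_k)},
\]
together with the boundary identities $\mathbf{h}_0 = \mathbf{r}_{()} = 1$ and $\mathbf{h}_n = \mathbf{r}_{(n)}$ for $n \ge 1$. The formula follows from the fact that $\mathbf{h}_n \mathbf{r}_L$ is the noncommutative generating function for concatenations $uv$ where $u$ is weakly increasing of length $n$ and $v$ has descent composition $L$. Such a word has no descent in the first $n-1$ positions, so the boundary position $n$ is either a descent (yielding descent composition $(n, L_1, \dots, L_k)$) or is not (in which case $u$ fuses with the first run of $v$, yielding $(n + L_1, L_2, \dots, L_k)$); conversely, every word with one of these two descent compositions factors uniquely as such a concatenation.

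Expanding $HR$ by means of this identity, I would collect the coefficient of $\mathbf{r}_M$ for each composition $M$. The empty composition receives only the contribution $v_0 w_{()} = 1$. For a nonempty $M = (M_1,\dots,M_j)$ (with the convention $w_{(M_2,\dots,M_j)} = 1$ when $j = 1$), the contributions to the coefficient of $\mathbf{r}_M$ are $w_M$ from $(n,L) = (0,M)$, $v_{M_1} w_{(M_2,\dots,M_j)}$ from $(n,L) = (M_1, (M_2,\dots,M_j))$, and $v_n w_{M_1 - n} w_{(M_2,\dots,M_j)}$ from $(n,L) = (n, (M_1 - n, M_2, \dots, M_j))$ for each $1 \le n \le M_1 - 1$. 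Summing and factoring out $w_{(M_2,\dots,M_j)}$ gives
\[
\left( \sum_{n=0}^{M_1} v_n w_{M_1 - n} \right) w_{(M_2, \dots, M_j)},
\]
which vanishes since $M_1 \ge 1$ and $\left(\sum_n v_n x^n\right)\left(\sum_k w_k x^k\right) = 1$ by the definition of the $v_n$. Thus $HR = 1$, as desired.

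The main obstacle is the careful verification of the product formula for $\mathbf{h}_n \mathbf{r}_L$: one must check that the partition of concatenations according to whether position $n$ is a descent exactly matches the two ribbon summands, and handle the boundary case $L = ()$ consistently with the general formula. Once this is in place, the remainder of the argument reduces to a routine convolution identity and bookkeeping of compositions.
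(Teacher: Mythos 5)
Your proof is correct, but note that this paper does not actually prove Theorem \ref{t-runs}: it states the result and defers the proof to Gessel's thesis \cite{gessel-thesis} and to Gessel and Zhuang \cite{Gessel2014}, so there is no in-paper argument to compare against; what the paper does prove (Theorem \ref{t-runsm}) is a reduction of the generalized statement \emph{to} Theorem \ref{t-runs} via matrix weights. Your route is a clean, self-contained substitute: the key lemma $\mathbf{h}_{n}\mathbf{r}_{L}=\mathbf{r}_{(n,L_{1},\dots,L_{k})}+\mathbf{r}_{(n+L_{1},L_{2},\dots,L_{k})}$ is exactly the noncommutative ribbon multiplication rule, and your word-level verification (splitting concatenations $uv$ according to whether position $n$ is a descent, with unique factorization in both directions) is sound; collecting the coefficient of each $\mathbf{r}_{M}$ is legitimate because $\{\mathbf{r}_{M}\}_{M\models n}$ is a basis of $\mathbf{Sym}_{n}$ and the sums are degreewise finite, and the resulting convolution $\sum_{n=0}^{M_{1}}v_{n}w_{M_{1}-n}=0$ for $M_{1}\geq1$ is immediate from the definition of the $v_{n}$. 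Two small points would make it airtight. First, you establish only the one-sided identity $HR=1$; since the degree-zero component of $H$ is $1$, $H$ is (two-sidedly) invertible in the completed graded algebra $A\otimes_{F}\mathbf{Sym}$ by the geometric series, so $HR=1$ does force $R=H^{-1}$, but a sentence saying so should be added. Second, your bookkeeping is exactly right for noncommutative $A$, and this is worth flagging as a feature rather than an afterthought: in every contribution to $\mathbf{r}_{M}$ the factor $v_{n}$ stands to the \emph{left} of $w_{M_{1}-n}$, matching the defining relation $\left(\sum_{n}v_{n}x^{n}\right)\left(\sum_{k}w_{k}x^{k}\right)=1$, and the common factor $w_{(M_{2},\dots,M_{j})}$ is pulled out on the \emph{right}; this is precisely the noncommutativity the paper insists on, since its proof of Theorem \ref{t-runsm} applies Theorem \ref{t-runs} with weights from $\Mat_{m}(A)$.
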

This theorem appeared in its original form as Theorem 5.2 of Gessel
\cite{gessel-thesis}, and is similar to Theorem 4.1 of Jackson and
Aleliunas \cite{MR0450080} and Theorem 4.2.3 of Goulden and Jackson
\cite{MR702512}. Gessel's statement of the theorem does not explicitly
use noncommutative symmetric functions, which were not formally defined
until 1995 in the seminal paper \cite{ncsf1} of Gelfand et al. However,
Gessel and Zhuang \cite[Theorem 11]{Gessel2014} restated the run
theorem using noncommutative symmetric functions and gave a different
proof of the result. 

Both previous versions of the run theorem---Theorem 5.2 of Gessel
\cite{gessel-thesis} and Theorem 11 of Gessel and Zhuang \cite{Gessel2014}---stated
that the weights are to commute with each other, but the proof in
\cite{Gessel2014} does not actually use this condition. Hence, we
allow our algebra $A$ to be commutative or noncommutative.%
\footnote{We do, however, require that the weights commute with noncommutative
symmetric functions. Formally, this means that we are working in the
tensor product algebra $A\otimes_{F}\mathbf{Sym}$.%
} Although we can simply set $A=\mathbb{C}$ in most applications,
the fact that we can take $A$ to be noncommutative is pivotal to
our proof of the generalized run theorem in the next subsection.

We call this result the run theorem because it can be used to obtain
many generating functions which count words and permutations with
various restrictions on the lengths of runs (see \cite[Section 5.2]{gessel-thesis}
and \cite[Section 4.3]{Gessel2014}). In fact, Gessel and Zhuang \cite{Gessel2014}
used this result to explain the similarity between David and Barton's
result (\ref{e-davidbarton}) and the generating function (\ref{e-gz2014})
for permutations with all peaks odd and all valleys even. We briefly
summarize the argument below.

By taking $w_{i}=1$ for $i<m$ and $w_{i}=0$ for $i\geq m$, the
run theorem gives 
\begin{equation}
\sum_{L}\mathbf{r}_{L}=\bigg(\sum_{n=0}^{\infty}(\mathbf{h}_{mn}-\mathbf{h}_{mn+1})\bigg)^{-1}\label{e-wr3}
\end{equation}
where the sum on the left is over all compositions $L$ with all parts
less than $m$. Hence, (\ref{e-wr3}) counts words with every increasing
run having length less than $m$. 

Next, define the homomorphism $\Phi:\mathbf{Sym}\rightarrow F[[x]]$
by $\Phi(\mathbf{h}_{n})=x^{n}/n!$. Then, Gessel and Zhuang \cite[Lemma 9]{Gessel2014}
showed that $\Phi(\mathbf{r}_{L})=\beta(L)x^{n}/n!$, where $\beta(L)$
is the number of permutations with descent composition $L$. Applying
$\Phi$ to (\ref{e-wr3}) then gives 
\[
\left(1-x+\frac{x^{m}}{m!}-\frac{x^{m+1}}{(2m+1)!}+\frac{x^{2m}}{(2m)!}-\frac{x^{2m+1}}{(2m+1)!}+\cdots\right)^{-1}
\]
as the exponential generating function for permutations with every
increasing run having length less than $m$. Setting $m=3$ gives
(\ref{e-davidbarton}).

To obtain (\ref{e-gz2014}), we define a variant of descents and increasing
runs as follows. Given a permutation $\pi$ in $\mathfrak{S}_{n}$,
we say that $i\in[n-1]$ is an \textit{alternating descent} of $\pi$
if $i$ is odd and $\pi_{i}>\pi_{i+1}$ or if $i$ is even and $\pi_{i}<\pi_{i+1}$.
Similarly, an \textit{alternating run} of $\pi$ is a maximal consecutive
subsequence of $\pi$ containing no alternating descents, and we can
also define alternating descent sets and alternating descent compositions
analogously.

We say that $\pi=\pi_{1}\pi_{2}\cdots\pi_{n}$ is an \textit{alternating
permutation} if $\pi_{1}>\pi_{2}<\pi_{3}<\pi_{4}<\cdots$ and that
$\pi$ is \textit{reverse-alternating} if $\pi_{1}<\pi_{2}>\pi_{3}<\pi_{4}>\cdots$.
Alternating permutations are in bijection with reverse-alternating
permutations by complementation, and it is well known that the number
of alternating $n$-permutations is the Euler number $E_{n}$. Incidentally,
an alternating run starting at an even position is an alternating
permutation and an alternating run starting at an odd position is
a reverse-alternating permutation, which hints at a close connection
between these alternating analogues of descents and runs with the
Euler numbers appearing in the generating function (\ref{e-gz2014}).

Define another homomorphism $\hat{\Phi}:\mathbf{Sym}\rightarrow F[[x]]$
by $\hat{\Phi}(\mathbf{h}_{n})=E_{n}x^{n}/n!$. Then, $\hat{\Phi}(\mathbf{r}_{L})=\hat{\beta}(L)x^{n}/n!$
\cite[Lemma 10]{Gessel2014}, where $\hat{\beta}(L)$ is the number
of permutations with alternating descent composition $L$. Applying
$\hat{\Phi}$ to (\ref{e-wr3}) gives (\ref{e-gz2014}) as the exponential
generating function for permutations with every alternating run having
length less than 3, and it is easy to show that these are precisely
the permutations with all peaks odd and all valleys even.

In general, if applying $\Phi$ to an identity obtained by assigning
weights to the run theorem yields a generating function counting permutations
with certain restrictions on increasing runs, then applying $\hat{\Phi}$
to the same identity yields a generating function counting permutations
with the same restrictions on alternating runs.

\subsection{The generalized run theorem}

Suppose that $G$ is a digraph on the vertex set $[m]$, where each
arc $(i,j)$ is assigned a nonempty subset $P_{i,j}$ of $\mathbb{P}$,
and let $P$ be the set of all pairs $(a,e)$ where $e=(i,j)$ is
an arc of $G$ and $a\in P_{i,j}$. In addition, let $\overrightarrow{P^{*}}\subseteq P^{*}$
be the subset of all sequences $\alpha=(a_{1},e_{1})(a_{2},e_{2})\cdots(a_{n},e_{n})$
where $e_{1}e_{2}\cdots e_{n}$ is a walk in $G$. Given $\alpha=(a_{1},e_{1})(a_{2},e_{2})\cdots(a_{n},e_{n})$
in $\overrightarrow{P^{*}}$, let $\rho(\alpha)=(a_{1},a_{2},\dots,a_{n})$,
and let $E(\alpha)=(i,j)$ where $i$ and $j$ are the initial and
terminal vertices, respectively, of the walk $e_{1}e_{2}\cdots e_{n}$.

We call this construction $(G,P)$ a \textit{run network} if for all
nonempty $\alpha,\beta\in\overrightarrow{P^{*}}$, if $\rho(\alpha)=\rho(\beta)$
and $E(\alpha)=E(\beta)$ then $\alpha=\beta$. That is, the same
tuple $(a_{1},a_{2},\dots,a_{n})$ cannot be obtained by traversing
two different walks with the same initial and terminal vertices. Given
a run network $(G,P)$, suppose that we want to count words in $\mathbb{P}^{*}$
with descent composition $L$ whose parts are given by a walk in $G$,
with various weights attached. This can be done using the following
generalization of the run theorem.
\begin{thm}[Generalized Run Theorem]
\label{t-runsm} Suppose that $G$ is a digraph on $[m]$ and that
$(G,P)$ is a run network on $\mathbb{P}^{*}$. Let $\{w_{a}^{(i.j)}:(a,(i,j))\in P\}$
be a set of weights from $A$, with $w_{a}^{(i,j)}=0$ if $(a,(i,j))\notin P$.
Given a composition $L$ and $1\leq i,j\leq m$, let $w^{(i,j)}(L)=w_{L_{1}}^{e_{1}}\cdots w_{L_{k}}^{e_{k}}$
if there exists \textup{$\alpha=(L_{1},e_{1})\cdots(L_{k},e_{k})\in\overrightarrow{P^{*}}$}
such that \textup{$E(\alpha)=(i,j)$} and $L=\rho(\alpha)$, and let
$w^{(i,j)}(L)=0$ otherwise. Then,
\[
\begin{bmatrix}{\displaystyle \sum_{L}w^{(1,1)}(L)\mathbf{r}_{L}} & \cdots & {\displaystyle \sum_{L}w^{(1,m)}(L)\mathbf{r}_{L}}\\
\vdots & \ddots & \vdots\\
{\displaystyle \sum_{L}w^{(m,1)}(L)\mathbf{r}_{L}} & \cdots & {\displaystyle \sum_{L}w^{(m,m)}(L)\mathbf{r}_{L}}
\end{bmatrix}=\begin{bmatrix}{\displaystyle \sum_{n=0}^{\infty}v_{n}^{(1,1)}\mathbf{h}_{n}} & \cdots & {\displaystyle \sum_{n=0}^{\infty}v_{n}^{(1,m)}\mathbf{h}_{n}}\\
\vdots & \ddots & \vdots\\
{\displaystyle \sum_{n=0}^{\infty}v_{n}^{(m,1)}\mathbf{h}_{n}} & \cdots & {\displaystyle \sum_{n=0}^{\infty}v_{n}^{(m,m)}\mathbf{h}_{n}}
\end{bmatrix}^{-1}
\]
where each sum in the matrix on the left-hand side is over all compositions
$L$ and the $v_{n}^{(i,j)}$ are given by
\[
\begin{bmatrix}{\displaystyle \sum_{n=0}^{\infty}v_{n}^{(1,1)}x^{n}} & \cdots & {\displaystyle \sum_{n=0}^{\infty}v_{n}^{(1,m)}x^{n}}\\
\vdots & \ddots & \vdots\\
{\displaystyle \sum_{n=0}^{\infty}v_{n}^{(m,1)}x^{n}} & \cdots & {\displaystyle \sum_{n=0}^{\infty}v_{n}^{(m,m)}x^{n}}
\end{bmatrix}=\left(I_{m}+\begin{bmatrix}{\displaystyle \sum_{k=1}^{\infty}w_{k}^{(1,1)}x^{k}} & \cdots & {\displaystyle \sum_{k=1}^{\infty}w_{k}^{(1,m)}x^{k}}\\
\vdots & \ddots & \vdots\\
{\displaystyle \sum_{k=1}^{\infty}w_{k}^{(m,1)}x^{k}} & \cdots & {\displaystyle \sum_{k=1}^{\infty}w_{k}^{(m,m)}x^{k}}
\end{bmatrix}\right)^{-1}.
\]
\end{thm}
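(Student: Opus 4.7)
The plan is to reduce Theorem \ref{t-runsm} to Theorem \ref{t-runs} by taking matrix-valued weights. Crucially, as emphasized in the paragraph following Theorem \ref{t-runs}, the run theorem is valid for a noncommutative weight algebra $A$, so I would take $A = \Mat_{m}(F)$ and let $W_k$ be the $m \times m$ matrix with $(i,j)$-entry $w_k^{(i,j)}$, so that $W_0 = I_m$ serves as the unity of $A$. Writing $W_L = W_{L_1}\cdots W_{L_k}$ for a composition $L = (L_1,\dots,L_k)$, applying Theorem \ref{t-runs} yields
\[
\sum_{L} W_L\, \mathbf{r}_L \;=\; \left(\sum_{n=0}^{\infty} V_n\, \mathbf{h}_n\right)^{-1},
\]
where $V_n = (v_n^{(i,j)})_{i,j}$ is determined by $\sum_n V_n x^n = \bigl(I_m + \sum_{k\geq 1} W_k x^k\bigr)^{-1}$, which matches the matrix inversion displayed in the statement. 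Reading off the $(i,j)$-entry of both sides gives exactly the claimed identity, provided we can verify that the $(i,j)$-entry of $W_L$ equals $w^{(i,j)}(L)$.

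That verification is the combinatorial core of the argument, and is precisely where the run network hypothesis enters. Expanding the matrix product,
\[
(W_L)_{i,j} \;=\; \sum_{i = i_0,\, i_1,\, \dots,\, i_k = j} w_{L_1}^{(i_0,i_1)} w_{L_2}^{(i_1,i_2)} \cdots w_{L_k}^{(i_{k-1},i_k)}.
\]
Since $w_{a}^{(i,j)} = 0$ whenever $(a,(i,j))\notin P$, only those tuples for which each $e_s := (i_{s-1}, i_s)$ is an arc of $G$ with $L_s \in P_{e_s}$ contribute; such a tuple is equivalent to an element $\alpha = (L_1, e_1)\cdots(L_k, e_k) \in \overrightarrow{P^{*}}$ with $\rho(\alpha) = L$ and $E(\alpha) = (i,j)$. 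The defining property of a run network says that at most one such $\alpha$ exists, so the sum reduces to a single term (or is empty), yielding exactly $w^{(i,j)}(L)$ as defined in the statement of the theorem.

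With that identification in place, the rest of the proof is just bookkeeping: invertibility of $I_m + \sum_{k\geq 1} W_k x^k$ over $\Mat_m(F[[x]])$ is automatic from the identity constant term, and likewise the inverse on the right-hand side makes sense over $\Mat_m(\mathbf{Sym})$ since its constant term in the $\mathbf{h}$-grading is $I_m$. The main obstacle, such as it is, is simply making the correspondence between paths in $G$ and nonzero terms in the matrix product fully transparent, and in particular noting that the run network property is used in exactly the right place --- without it, an $(i,j)$-entry of $W_L$ could be a sum over multiple walks rather than a single weight, and the reduction to Theorem \ref{t-runs} would produce a weaker and less useful statement.
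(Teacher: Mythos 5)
Your proposal is correct and is essentially identical to the paper's own proof: both apply the original run theorem with matrix-valued weights, verify that the $(i,j)$-entry of $W_{L_1}\cdots W_{L_k}$ collapses to the single term $w^{(i,j)}(L)$ via the run network's uniqueness-of-walks property, and then read off entries. The only slip is that you should take the weight algebra to be $\Mat_m(A)$ rather than $\Mat_m(F)$, since the original weights $w_a^{(i,j)}$ are allowed to lie in an arbitrary algebra $A$; the argument is otherwise unchanged.
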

\begin{proof}
We apply the original run theorem with weights coming from the matrix
algebra $\Mat_{m}(A)$. Set 
\[
w_{k}=\begin{bmatrix}w_{k}^{(1,1)} & \cdots & w_{k}^{(1,m)}\\
\vdots & \ddots & \vdots\\
w_{k}^{(m,1)} & \cdots & w_{k}^{(m,m)}
\end{bmatrix}\quad\mathrm{and}\quad w_{L}=\begin{bmatrix}w^{(1,1)}(L) & \cdots & w^{(1,m)}(L)\\
\vdots & \ddots & \vdots\\
w^{(m,1)}(L) & \cdots & w^{(m,m)}(L)
\end{bmatrix}.
\]
It suffices to verify that if $L=(L_{1},\dots,L_{k})$ is a composition,
then $w_{L}=w_{L_{1}}w_{L_{2}}\cdots w_{L_{k}}$. Indeed, the $(i,j)$th
entry of $w_{L_{1}}w_{L_{2}}\cdots w_{L_{k}}$ is
\[
\sum_{\substack{1\leq p_{1},\dots,p_{k+1}\leq m\\
p_{1}=i,\; p_{k+1}=j
}
}w_{L_{1}}^{(p_{1},p_{2})}w_{L_{2}}^{(p_{2},p_{3})}\cdots w_{L_{k}}^{(p_{k},p_{k+1})},
\]
but at most one of these summands is nonzero because a run network
is defined so that the same descent composition cannot be obtained
by traversing two different walks with the same initial and terminal
vertices. This precisely gives us $w^{(i,j)}(L)$, the $(i,j)$th
entry of $w_{L}$, and thus the theorem is proven.
\end{proof}
For example, suppose that we want to count words having descent compositions
of the form $(2,3,2,3\dots,2,3)$. Then, consider the following run
network:\begin{center}
\begin{tikzpicture}[->,>=stealth',shorten >=1pt,auto,node distance=3cm,   thick,main node/.style={circle,fill=blue!20,draw,font=\sffamily}]

\node[main node] (1) {1};
\node[main node] (2) [right of =1] {2};

\path[every node/.style={font=\sffamily\small}]     
(1) edge [bend left] node {$\{2\}$} (2)    
(2) edge [bend left] node {$\{3\}$} (1);

\end{tikzpicture}
\end{center}

The words that we want to count have descent compositions corresponding
to walks in this digraph beginning and ending at vertex 1. By taking
all nonzero weights to be 1 and applying Theorem \ref{t-runsm}, it
follows that the desired generating function is the $(1,1)$ entry
of the matrix 
\[
\begin{bmatrix}{\displaystyle \sum_{n=0}^{\infty}v_{n}^{(1,1)}\mathbf{h}_{n}}\vphantom{{\displaystyle \frac{\frac{dy_{a}}{dx_{a}}}{\frac{dy_{a}}{dx_{a}}}}} & {\displaystyle \sum_{n=0}^{\infty}v_{n}^{(1,2)}\mathbf{h}_{n}}\\
{\displaystyle \sum_{n=0}^{\infty}v_{n}^{(2,1)}\mathbf{h}_{n}}\vphantom{{\displaystyle \frac{\frac{dy_{a}}{dx_{a}}}{\frac{dy_{a}}{dx_{a}}}}} & {\displaystyle \sum_{n=0}^{\infty}v_{n}^{(2,2)}\mathbf{h}_{n}}
\end{bmatrix}^{-1}
\]
where the $v_{n}^{(i,j)}$ are given by 
\begin{align}
\begin{bmatrix}{\displaystyle \sum_{n=0}^{\infty}v_{n}^{(1,1)}x^{n}}\vphantom{{\displaystyle \frac{\frac{dy_{a}}{dx_{a}}}{\frac{dy_{a}}{dx_{a}}}}} & {\displaystyle \sum_{n=0}^{\infty}v_{n}^{(1,2)}x^{n}}\\
{\displaystyle \sum_{n=0}^{\infty}v_{n}^{(2,1)}x^{n}}\vphantom{{\displaystyle \frac{\frac{dy_{a}}{dx_{a}}}{\frac{dy_{a}}{dx_{a}}}}} & {\displaystyle \sum_{n=0}^{\infty}v_{n}^{(2,2)}x^{n}}
\end{bmatrix} & =\left(I_{2}+\begin{bmatrix}0 & x^{2}\\
x^{3} & 0
\end{bmatrix}\right)^{-1}\nonumber \\
 & =\begin{bmatrix}{\displaystyle \frac{1}{1-x^{5}}}\vphantom{{\displaystyle \frac{\frac{dy}{dx}}{\frac{dy}{dx}}}} & {\displaystyle -\frac{x^{2}}{1-x^{5}}}\\
{\displaystyle -\frac{x^{3}}{1-x^{5}}}\vphantom{{\displaystyle \frac{\frac{dy}{dx}}{\frac{dy}{dx}}}} & {\displaystyle \frac{1}{1-x^{5}}}
\end{bmatrix}\nonumber \\
 & =\begin{bmatrix}{\displaystyle \sum_{n=0}^{\infty}x^{5}}\vphantom{{\displaystyle \frac{\frac{dy_{a}}{dx_{a}}}{\frac{dy_{a}}{dx_{a}}}}} & {\displaystyle -\sum_{n=0}^{\infty}x^{5n+2}}\\
{\displaystyle -\sum_{n=0}^{\infty}x^{5n+3}}\vphantom{{\displaystyle \frac{\frac{dy_{a}}{dx_{a}}}{\frac{dy_{a}}{dx_{a}}}}} & {\displaystyle \sum_{n=0}^{\infty}x^{5}}
\end{bmatrix}.\label{e-egogf}
\end{align}

Therefore, our desired matrix is 
\begin{equation}
\begin{bmatrix}{\displaystyle \sum_{n=0}^{\infty}v_{n}^{(1,1)}\mathbf{h}_{n}}\vphantom{{\displaystyle \frac{\frac{dy_{a}}{dx_{a}}}{\frac{dy_{a}}{dx_{a}}}}} & {\displaystyle \sum_{n=0}^{\infty}v_{n}^{(1,2)}\mathbf{h}_{n}}\\
{\displaystyle \sum_{n=0}^{\infty}v_{n}^{(2,1)}\mathbf{h}_{n}}\vphantom{{\displaystyle \frac{\frac{dy_{a}}{dx_{a}}}{\frac{dy_{a}}{dx_{a}}}}} & {\displaystyle \sum_{n=0}^{\infty}v_{n}^{(2,2)}\mathbf{h}_{n}}
\end{bmatrix}^{-1}=\begin{bmatrix}{\displaystyle \sum_{n=0}^{\infty}\mathbf{h}_{5n}}\vphantom{{\displaystyle \frac{\frac{dy_{a}}{dx_{a}}}{\frac{dy_{a}}{dx_{a}}}}} & {\displaystyle -\sum_{n=0}^{\infty}\mathbf{h}_{5n+2}}\\
{\displaystyle -\sum_{n=0}^{\infty}\mathbf{h}_{5n+3}}\vphantom{{\displaystyle \frac{\frac{dy_{a}}{dx_{a}}}{\frac{dy_{a}}{dx_{a}}}}} & {\displaystyle \sum_{n=0}^{\infty}\mathbf{h}_{5n}}
\end{bmatrix}^{-1}.\label{e-egncsf}
\end{equation}
The homomorphism $\Phi$ defined in the previous subsection induces
a homomorphism on the corresponding matrix algebras, which we also
call $\Phi$ by a slight abuse of notation. Thus, by applying $\Phi$
to (\ref{e-egncsf}), we obtain the matrix 
\begin{equation}
\begin{bmatrix}{\displaystyle \sum_{n=0}^{\infty}\frac{x^{5n}}{(5n)!}}\vphantom{{\displaystyle \frac{\frac{dy_{a}}{dx_{a}}}{\frac{dy_{a}}{dx_{a}}}}} & {\displaystyle -\sum_{n=0}^{\infty}\frac{x^{5n+2}}{(5n+2)!}}\\
{\displaystyle -\sum_{n=0}^{\infty}\frac{x^{5n+3}}{(5n+3)!}}\vphantom{{\displaystyle \frac{\frac{dy_{a}}{dx_{a}}}{\frac{dy_{a}}{dx_{a}}}}} & {\displaystyle \sum_{n=0}^{\infty}\frac{x^{5n}}{(5n)!}}
\end{bmatrix}^{-1},\label{e-egegf}
\end{equation}
whose $(1,1)$ entry is the exponential generating function for permutations
having descent composition of the form $(2,3,2,3\dots,2,3)$. Observe
that (\ref{e-egegf}) is the inverse of the matrix obtained by taking
(\ref{e-egogf}) and converting the ordinary generating functions
to exponential generating functions.

Similarly, we can apply $\hat{\Phi}$ to (\ref{e-egncsf}) and obtain
the analogous result for alternating runs by taking the $(1,1)$ entry
of 
\[
\begin{bmatrix}{\displaystyle \sum_{n=0}^{\infty}E_{n}\frac{x^{5n}}{(5n)!}}\vphantom{{\displaystyle \frac{\frac{dy_{a}}{dx_{a}}}{\frac{dy_{a}}{dx_{a}}}}} & {\displaystyle -\sum_{n=0}^{\infty}E_{n+2}\frac{x^{5n+2}}{(5n+2)!}}\\
{\displaystyle -\sum_{n=0}^{\infty}E_{n+3}\frac{x^{5n+3}}{(5n+3)!}}\vphantom{{\displaystyle \frac{\frac{dy_{a}}{dx_{a}}}{\frac{dy_{a}}{dx_{a}}}}} & {\displaystyle \sum_{n=0}^{\infty}E_{n}\frac{x^{5n}}{(5n)!}}
\end{bmatrix}^{-1}.
\]

Finally, we note that the original run theorem can be retrieved from
the generalized run theorem by using the run network with one vertex
and a loop to which the entire set $\mathbb{P}$ is assigned. Hence,
Theorem \ref{t-runsm} is indeed a generalization of Theorem \ref{t-runs}.

\section{Permutations with parity restrictions on peaks and valleys}

\subsection{The exponential generating functions}

For completeness, we restate Gessel and Zhuang's result for the exponential
generating function for permutations with all peaks odd and all valleys
even.
\begin{thm}[Gessel, Zhuang 2014]
 Let $a_{n}$ be the number of $n$-permutations with all peaks odd
and all valleys even. Then the exponential generating function $A(x)$
for $\{a_{n}\}_{n\geq0}$ is

\begin{align*}
A(x) & =\frac{3\sin\left(\frac{1}{2}x\right)+3\cosh\left(\frac{1}{2}\sqrt{3}x\right)}{3\cos\left(\frac{1}{2}x\right)-\sqrt{3}\sinh\left(\frac{1}{2}\sqrt{3}x\right)}\\
 & =\left(1-E_{1}x+E_{3}\frac{x^{3}}{3!}-E_{4}\frac{x^{4}}{4!}+E_{6}\frac{x^{6}}{6!}-E_{7}\frac{x^{7}}{7!}+\cdots\right)^{-1}
\end{align*}
where $\sum_{k=0}^{\infty}E_{k}x^{k}/k!=\sec x+\tan x$.
\end{thm}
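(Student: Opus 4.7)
The plan is to follow the roadmap sketched at the end of Section~2.2: apply the run theorem (Theorem \ref{t-runs}) to extract a noncommutative symmetric function identity, push it through the homomorphism $\hat{\Phi}$, identify the counted class of permutations combinatorially, and then evaluate the resulting exponential generating function in closed form.

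First I would apply Theorem \ref{t-runs} with $w_1 = w_2 = 1$ and $w_k = 0$ for $k \geq 3$. A short calculation using $\sum_{n \geq 0} v_n x^n = (1 + x + x^2)^{-1} = (1 - x)/(1 - x^3)$ gives $v_{3n} = 1$, $v_{3n+1} = -1$, and $v_{3n+2} = 0$, so the run theorem specializes to
\[
\sum_L \mathbf{r}_L \;=\; \Big(\sum_{n=0}^\infty (\mathbf{h}_{3n} - \mathbf{h}_{3n+1})\Big)^{-1},
\]
the sum on the left being over all compositions $L$ with parts in $\{1, 2\}$ --- precisely the descent compositions of words whose every increasing run has length less than $3$. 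Applying $\hat{\Phi}$ (which sends $\mathbf{h}_n \mapsto E_n x^n/n!$ and $\mathbf{r}_L \mapsto \hat{\beta}(L) x^n/n!$ by \cite[Lemma~10]{Gessel2014}) then converts the left-hand side into the EGF for permutations whose every alternating run has length less than $3$ and converts the right-hand side into the middle expression of the theorem. The remaining combinatorial step is to check that an alternating run of length $\geq 3$ at position $i$ forces either a peak at the even position $i + 1$ (if $i$ is odd) or a valley at the odd position $i + 1$ (if $i$ is even), so that all alternating runs have length less than $3$ if and only if every peak is odd and every valley is even.

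To obtain the closed trigonometric form, I would evaluate the denominator
\[
D(x) \;=\; \sum_{n = 0}^\infty \Big(E_{3n} \frac{x^{3n}}{(3n)!} - E_{3n+1} \frac{x^{3n+1}}{(3n+1)!}\Big)
\]
by applying the roots-of-unity filter with $\zeta = e^{2\pi i/3}$ to $\sec x + \tan x = \sum_{n \geq 0} E_n x^n/n!$, which writes $D(x)$ as a $\mathbb{C}$-linear combination of $(\sec + \tan)(\zeta^j x)$ for $j = 0, 1, 2$. Writing $\zeta = -\tfrac{1}{2} + \tfrac{\sqrt{3}}{2} i$, the angle-addition formulas for sine and cosine together with $\sin(iy) = i \sinh y$ and $\cos(iy) = \cosh y$ rewrite each $(\sec + \tan)(\zeta^j x)$ in terms of real sines, cosines, and hyperbolic sines and cosines of $x/2$ and $\sqrt{3} x/2$; collecting terms should yield the claimed ratio.

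The main obstacle is this last closed-form simplification: it is mechanical but bulky, and the cleanest way to finish it is to verify $A(x) \cdot D(x) = 1$ directly, reducing the claim to a trigonometric identity in $\sin(x/2)$, $\cos(x/2)$, $\sinh(\sqrt{3} x/2)$, and $\cosh(\sqrt{3} x/2)$ that follows from the Pythagorean identities $\sin^2 + \cos^2 = 1$ and $\cosh^2 - \sinh^2 = 1$.
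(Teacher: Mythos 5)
Your proposal is correct and follows essentially the same route as the paper: specializing Theorem \ref{t-runs} with $w_{1}=w_{2}=1$ and $w_{k}=0$ for $k\geq3$ reproduces the paper's identity (\ref{e-wr3}) with $m=3$, applying $\hat{\Phi}$ yields the middle expression, and your parity analysis (an alternating run of length $\geq3$ forces an even peak or an odd valley, and conversely) is exactly the paper's identification of these permutations with those whose alternating runs all have length less than $3$. The only additional content is your roots-of-unity derivation of the closed trigonometric form, a verification the paper delegates to the cited Gessel--Zhuang reference; your sketch of it is sound.
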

Our main result in this section is the following:
\begin{thm}
\label{t-bcegfs} Let $b_{n}$ be the number of $n$-permutations
with all peaks and valleys even, and let $c_{n}$ be the number of
$n$-permutations with all peaks and valleys odd. Then the exponential
generating functions $B(x)$ for $\{b_{n}\}_{n\geq0}$ and $C(x)$
for $\{c_{n}\}_{n\geq0}$ are 
\[
B(x)=(1+x)\frac{2+2\cosh(\sqrt{2}x)+\sqrt{2}x\sinh(\sqrt{2}x)}{2+2\cosh(\sqrt{2}x)-\sqrt{2}x\sinh(\sqrt{2}x)}
\]
 and 
\[
C(x)=\frac{2+2\cosh(\sqrt{2}x)+\sqrt{2}(2+x)\sinh(\sqrt{2}x)}{2+2\cosh(\sqrt{2}x)-\sqrt{2}x\sinh(\sqrt{2}x)}.
\]

\end{thm}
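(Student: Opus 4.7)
The plan is to apply the generalized run theorem (Theorem~\ref{t-runsm}) with a run network tailored to each of $B(x)$ and $C(x)$. The first step is to rephrase the peak/valley parity conditions in terms of increasing runs: a run of length $\geq 2$ creates a valley at its starting position (unless it is the first run) and a peak at its ending position (unless it is the last run), while a run of length $1$ creates neither. Thus the condition ``all peaks and valleys even'' becomes the following for $B(x)$: every long middle run both begins and ends at an even position (and hence has odd length and begins at an even position), a long first run has its peak at position equal to its length and so the length must be even, and a long last run must begin at an even position. For $C(x)$ the same description holds with ``odd'' and ``even'' interchanged throughout.

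These conditions are encoded by a four-vertex run network on $\{S, O, E, F\}$, where $S$ is the initial vertex (before the first run), $F$ is a sink representing the completion of the last run, and $O$ (respectively $E$) records that the next run is to begin at an odd (respectively even) position. For $B(x)$ the arcs, with allowed lengths in parentheses, are $S \to O$ (any even $\geq 2$), $S \to E$ (length $1$), $S \to F$ (any length, accounting for single-run permutations), $O \to E$ (length $1$), $E \to O$ (any odd length), $O \to F$ (length $1$), and $E \to F$ (any length). For $C(x)$ one uses the analogous network with $S \to E$ and $O \to E$ of any odd length, $S \to F$ and $O \to F$ of any length, $E \to O$ and $E \to F$ of length $1$, and no $S \to O$ arc. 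In both cases the source vertex, the length, and the terminal vertex of a walk jointly determine the arc used, so the run network condition required by Theorem~\ref{t-runsm} holds.

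I would then write down the $4 \times 4$ matrix $I_4 + W(x)$ of OGF arc weights. Because this matrix is block upper triangular with block sizes $1$, $2$, and $1$, its inverse can be written in closed form from the inverse of the $2 \times 2$ block on $\{O, E\}$. Applying the homomorphism $\Phi$ entrywise (replacing each $x^n$ with $x^n/n!$) and inverting the resulting EGF matrix yields, in the $(S, F)$ entry, the EGF for nonempty permutations satisfying the parity constraint; adding $1$ for the empty permutation then gives $B(x)$ or $C(x)$. A direct computation shows that the determinant of the $2 \times 2$ EGF block equals $\bigl(1 + \cosh(\sqrt{2}x) - x\sinh(\sqrt{2}x)/\sqrt{2}\bigr)/2$, which up to a factor of $4$ is precisely the common denominator $2 + 2\cosh(\sqrt{2}x) - \sqrt{2}x\sinh(\sqrt{2}x)$ in both target formulas.

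The main obstacle is the algebraic simplification of the numerator of the $(S, F)$ entry. Combining the contributions of $A'(M')^{-1}c'$, $-b'$, and the constant $1$ produces squared hyperbolic terms $\cosh^2(\sqrt{2}x)$ and $\sinh^2(\sqrt{2}x)$, which must be collapsed using $\cosh^2 - \sinh^2 = 1$ to keep the expression a linear combination of $1$, $\cosh(\sqrt{2}x)$, and $\sinh(\sqrt{2}x)$ and avoid the spurious appearance of $\cosh(2\sqrt{2}x)$. After this simplification the numerator factors as $(1+x)\bigl[2 + 2\cosh(\sqrt{2}x) + \sqrt{2}x\sinh(\sqrt{2}x)\bigr]$ in the $B$-case, the factor $(1+x)$ producing the prefactor in the theorem statement, while for $C(x)$ the numerator becomes $2 + 2\cosh(\sqrt{2}x) + \sqrt{2}(2+x)\sinh(\sqrt{2}x)$, matching the claimed expressions.
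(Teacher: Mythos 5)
Your proposal is correct, but it takes a genuinely different route from the paper's. Both arguments feed Theorem \ref{t-runsm} through the homomorphism $\Phi$, but the paper handles $B(x)$ by splitting permutations according to whether they begin and end with an ascent or a descent, reducing to two cases by complementation, encoding each with a separate five-vertex network, and assembling $B(x)=2B_{1}(x)+2B_{2}(x)+2e^{x}-x-1$; for $C(x)$ it avoids run networks entirely, instead combining the reversal identity $c_{2n}=b_{2n}$ with the append-a-letter bijection $c_{2n}=2nc_{2n-1}$ of Proposition \ref{p-cevenodd} to extract $C(x)$ from $B_{\mathrm{even}}(x)$. Your single four-vertex network, tracking the parity of the position at which the next run begins, absorbs all four boundary cases at once (the $S\to F$ arc captures increasing permutations, chains of short runs capture decreasing ones), so no symmetry reduction or correction terms are needed beyond adding $1$ for the empty permutation, and $C(x)$ comes from the same machine rather than from auxiliary bijections. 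I checked your encodings and computations: for instance, your $B$-network produces exactly the compositions $(5)$, $(1,4)$, $(1,3,1)$, $(1,1,1,2)$, $(1,1,1,1,1)$, $(4,1)$, $(2,1,2)$, $(2,1,1,1)$ at $n=5$, whose $\beta$-values sum to $40=b_{5}$, and your $C$-network gives $c_{4}=8$, matching the paper's table; moreover your determinant $\frac{1}{2}\bigl(1+\cosh(\sqrt{2}x)\bigr)-\frac{x\sinh(\sqrt{2}x)}{2\sqrt{2}}$ for the $\{O,E\}$ block and the two final numerators agree with a direct block-inverse computation, which in fact yields $B(x)=(1+x)+\frac{2\sqrt{2}x(1+x)\sinh(\sqrt{2}x)}{2+2\cosh(\sqrt{2}x)-\sqrt{2}x\sinh(\sqrt{2}x)}$ and the claimed $C(x)$ with less cancellation than you anticipate. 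What each approach buys: yours is leaner and uniform (one $4\times4$ inversion per generating function, with the $2\times2$ middle block doing all the work), while the paper's detour through Propositions \ref{p-bevenodd} and \ref{p-cevenodd} produces combinatorial identities of independent interest and explains structurally why $B$ and $C$ share their even parts and denominator. One small point to tighten: your one-line justification of the run-network condition should say explicitly that since $F$ is a sink, an arc into $F$ can occur only as the final step of a walk, while within $\{S,O,E\}$ each pair (source vertex, run length) determines at most one arc; this matters because, e.g., from $S$ the label $1$ alone does not distinguish $S\to E$ from $S\to F$, and only the sink property rules out ambiguity between walks with equal label tuples and equal endpoints.
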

The first several terms of these sequences are as follows:

\begin{center}
\begin{tabular}{c|ccccccccccccc}
$n$ & 0 & 1 & 2 & 3 & 4 & 5 & 6 & 7 & 8 & 9 & 10 & 11 & 12\tabularnewline
\hline 
$a_{n}$ & 1 & 1 & 2 & 4 & 13 & 50 & 229 & 1238 & 7614 & 52706 & 405581 & 3432022 & 31684445\tabularnewline
\hline 
$b_{n}$ & 1 & 1 & 2 & 6 & 8 & 40 & 84 & 588 & 1632 & 14688 & 51040 & 561440 & 2340480\tabularnewline
\hline 
$c_{n}$ & 1 & 1 & 2 & 2 & 8 & 14 & 84 & 204 & 1632 & 5104 & 51040 & 195040 & 2340480\tabularnewline
\end{tabular}
\par\end{center}

The sequence $\{a_{n}\}_{n\geq0}$ can be found on the OEIS \cite[A246012]{oeis}.

By looking at these numbers, we make the following observations:
\begin{enumerate}
\item $c_{2n}=b_{2n}$ for $n\geq0$;
\item $c_{2n+1}<b_{2n+1}$ for $n\geq1$;
\item $b_{n}<a_{n}$ for $n\geq4$.
\end{enumerate}
The first observation is immediate from the fact that $2n$-permutations
with all peaks and valleys odd are in bijection with $2n$-permutations
with all peaks and valleys even via reflection. That is, if $\pi=\pi_{1}\pi_{2}\cdots\pi_{2n}\in\mathfrak{S}_{2n}$
has all peaks and valleys odd, then its reflection 
\[
\pi^{r}=\pi_{2n}\pi_{2n-1}\cdots\pi_{1}
\]
has all peaks and valleys even, and vice versa. We will show that
the second and third observations are both true at the end of this
section, after computing the generating functions $B(x)$ and $C(x)$.

\subsection{Permutations with all peaks and valleys even}

We first find the exponential generating function $B(x)$ for permutations
with all peaks and valleys even using the generalized run theorem.
To construct a suitable run network, we will need to find increasing
run patterns for these permutations. This can be done in general,
but will depend on whether the permutation begins with an ascent or
descent and ends with an ascent or descent.

Notice that the permutations counted by $B(x)$ which start and end
with ascents are in bijection via complementation with those permutations
which start and end with descents. For the same reason, those starting
with a descent and ending with an ascent are equinumerous with those
starting with an ascent and ending with a descent. Thus, we only need
to consider two cases.

First, let us consider permutations with all peaks and valleys even
which begin and end with ascents. The descent compositions of these
permutations, other than the increasing permutations $12\cdots n$,
are given by walks from vertex 1 to vertex 5 in the following run
network, which we call $(G_{1},P_{1})$:

\begin{center}
\begin{tikzpicture}[->,>=stealth',shorten >=1pt,auto,node distance=3cm,   thick,main node/.style={circle,fill=blue!20,draw,font=\sffamily}]

\node[main node] (1) {1};
\node[main node] (2) [right of =1] {2};
\node[main node] (3) [right of =2] {3};
\node[main node] (4) [right of =3] {4};
\node[main node] (5) [below of =3] {5};

\path[every node/.style={font=\sffamily\small}]     
(1) edge [below] node {$\{2, 4, 6, \dots\}$} (2)    
(2) edge [bend left] node {$\{1\}$} (3)
(3) edge [bend left] node {$\{1\}$} (4)
edge [bend left] node {$\{3, 5, 7, \dots\}$} (2)
edge [right] node {$\{2, 3, 4, 5, \dots\}$} (5)
(4) edge [bend left] node {$\{1\}$} (3);

\end{tikzpicture}
\end{center}Indeed, the permutation must begin with an increasing run of even
length before reaching a peak, followed by an odd number of short
increasing runs before reaching a valley.%
\footnote{Recall that an increasing run is called \textit{short} if it has length
1, and it is called \textit{long} if it has length at least 2.%
} Then, going from a valley to a peak corresponds to a long increasing
run of odd length, and once again followed by an odd number of short
increasing runs before reaching another valley. This pattern continues
until the permutation reaches its final valley, and then ends with
a long increasing run.

Let $B_{1}(x)$ denote the exponential generating function for the
permutations corresponding to walks from 1 to 5 in $(G_{1},P_{1})$.
Applying Theorem \ref{t-runsm} with all nonzero weights set equal
to 1 and then applying the homomorphism $\Phi$, we know that $B_{1}(x)$
is the $(1,5)$ entry of 
\[
\begin{bmatrix}{\displaystyle \sum_{n=0}^{\infty}v_{n}^{(1,1)}\frac{x^{n}}{n!}} & \cdots & {\displaystyle \sum_{n=0}^{\infty}v_{n}^{(1,5)}\frac{x^{n}}{n!}}\\
\vdots & \ddots & \vdots\\
{\displaystyle \sum_{n=0}^{\infty}v_{n}^{(5,1)}\frac{x^{n}}{n!}} & \cdots & {\displaystyle \sum_{n=0}^{\infty}v_{n}^{(5,5)}\frac{x^{n}}{n!}}
\end{bmatrix}^{-1},
\]
where the $v_{n}^{(i,j)}$ are given by {\allowdisplaybreaks 
\begin{align*}
\begin{bmatrix}{\displaystyle \sum_{n=0}^{\infty}v_{n}^{(1,1)}x^{n}} & \cdots & {\displaystyle \sum_{n=0}^{\infty}v_{n}^{(1,5)}x^{n}}\\
\vdots & \ddots & \vdots\\
{\displaystyle \sum_{n=0}^{\infty}v_{n}^{(5,1)}x^{n}} & \cdots & {\displaystyle \sum_{n=0}^{\infty}v_{n}^{(5,5)}x^{n}}
\end{bmatrix} & =\left(I_{5}+\begin{bmatrix}0 & \frac{x^{2}}{1-x^{2}} & 0 & 0 & 0\\
0 & 0 & x & 0 & 0\\
0 & \frac{x^{3}}{1-x^{2}} & 0 & x & \frac{x^{2}}{1-x}\\
0 & 0 & x & 0 & 0\\
0 & 0 & 0 & 0 & 0
\end{bmatrix}\right)^{-1}\\
 & =\begin{bmatrix}1 & -\frac{(1-x^{2})x^{2}}{1-2x^{2}} & \frac{x^{3}}{1-2x^{2}} & -\frac{x^{4}}{1-2x^{2}} & -\frac{x^{5}}{(1-2x^{2})(1-x)}\\
0 & \frac{1-2x^{2}+x^{4}}{1-2x^{2}} & \frac{(1-x^{2})x}{1-2x^{2}} & \frac{(1-x^{2})x^{2}}{1-2x^{2}} & \frac{(1+x)x^{3}}{1-2x^{2}}\\
0 & -\frac{x^{3}}{1-2x^{2}} & \frac{1-x^{2}}{1-2x^{2}} & \frac{(1-x^{2})x}{1-2x^{2}} & -\frac{x^{2}(1+x)}{1-2x^{2}}\\
0 & \frac{x^{4}}{1-2x^{2}} & -\frac{(1-x^{2})x}{1-2x^{2}} & -\frac{1-2x^{2}+x^{4}}{1-2x^{2}} & \frac{x^{3}(1+x)}{1-2x^{2}}\\
0 & 0 & 0 & 0 & 1
\end{bmatrix}.
\end{align*}
}

Converting these ordinary generating functions to exponential generating
functions yields the matrix 
\[
\begin{bmatrix}1 & \frac{1-x^{2}-\cosh(\sqrt{2}x)}{4} & -\frac{2x-\sqrt{2}\sinh(\sqrt{2}x)}{4} & \frac{1+x^{2}-\cosh(\sqrt{2}x)}{4} & \frac{3+2x+x^{2}+\sqrt{2}\sinh(\sqrt{2}x)+\cosh(\sqrt{2}x)-e^{x}}{4}\\
0 & \frac{3-x^{2}+\cosh(\sqrt{2}x)}{4} & -\frac{2x+\sqrt{2}\sinh(\sqrt{2}x)}{4} & -\frac{1-x^{2}-\cosh(\sqrt{2}x)}{4} & -\frac{1+2x+x^{2}-\sqrt{2}\sinh(\sqrt{2}x)-\cosh(\sqrt{2}x)}{4}\\
0 & \frac{2x-\sqrt{2}\sinh(\sqrt{2}x)}{4} & \frac{1+\cosh(\sqrt{2}x)}{2} & -\frac{2x+\sqrt{2}\sinh(\sqrt{2}x)}{4} & \frac{2+2x-\sqrt{2}\sinh(\sqrt{2}x)+2\cosh(\sqrt{2}x)}{4}\\
0 & -\frac{1+x^{2}-\cosh(\sqrt{2}x)}{4} & -\frac{2x+\sqrt{2}\sinh(\sqrt{2}x)}{4} & \frac{3+x^{2}+\cosh(\sqrt{2}x)}{4} & -\frac{1+2x+x^{2}-\sqrt{2}\sinh(\sqrt{2}x)-\cosh(\sqrt{2}x)}{4}\\
0 & 0 & 0 & 0 & 1
\end{bmatrix},
\]
whose inverse matrix has $(1,5)$ entry 
\[
B_{1}(x)=\frac{4+4x+x^{2}+(4-x^{2}-4e^{x})\cosh(\sqrt{2}x)+2\sqrt{2}(1+xe^{x})\sinh(\sqrt{2}x)-4e^{x}}{4+4\cosh(\sqrt{2}x)-2\sqrt{2}x\sinh(\sqrt{2}x)}.
\]

Next, we consider permutations with all peaks and valleys even which
begin with a descent and end with an ascent. The increasing runs of
these permutations follow a very similar pattern as before, but it
must begin with an odd number of short increasing runs because the
first letter is a descent rather than an ascent. Therefore, their
descent compositions are given by walks from 1 to 5 in the following
run network, which we call $(G_{2},P_{2})$:

\begin{center}
\begin{tikzpicture}[->,>=stealth',shorten >=1pt,auto,node distance=3cm,   thick,main node/.style={circle,fill=blue!20,draw,font=\sffamily}]

\node[main node] (1) {1};
\node[main node] (2) [right of =1] {2};
\node[main node] (3) [above of =2] {3};
\node[main node] (4) [below of =2] {4};
\node[main node] (5) [right of =2] {5};

\path[every node/.style={font=\sffamily\small}]     
(1) edge [below] node {$\{1\}$} (2)    
(2) edge [bend left] node {$\{1\}$} (3)
edge [bend left] node {$\{3, 5, 7, \dots\}$} (4)
edge [below] node {$\{2, 3, 4, 5, \dots\}$} (5)
(3) edge [bend left] node {$\{1\}$} (2)
(4) edge [bend left] node {$\{1\}$} (2);

\end{tikzpicture}
\end{center}

Repeating the same procedure as before, we start by computing 
\[
\left(I_{5}+\begin{bmatrix}0 & x & 0 & 0 & 0\\
0 & 0 & x & \frac{x^{3}}{1-x^{2}} & \frac{x^{3}}{1-x^{2}}\\
0 & x & 0 & 0 & 0\\
0 & x & x & 0 & 0\\
0 & 0 & 0 & 0 & 0
\end{bmatrix}\right)^{-1}=\begin{bmatrix}1 & -\frac{(1-x^{2})x}{1-2x^{2}} & \frac{(1-x^{2})x^{2}}{1-2x^{2}} & \frac{x^{4}}{1-2x^{2}} & \frac{(1+x)x^{3}}{1-2x^{2}}\\
0 & \frac{1-x^{2}}{1-2x^{2}} & \frac{(1-x^{2})x}{1-2x^{2}} & -\frac{x^{3}}{1-2x^{2}} & -\frac{(1+x)x^{2}}{1-2x^{2}}\\
0 & -\frac{(1-x^{2})x}{1-2x^{2}} & \frac{1-x^{2}-x^{4}}{1-2x^{2}} & \frac{x^{4}}{1-2x^{2}} & \frac{(1+x)x^{3}}{1-2x^{2}}\\
0 & -\frac{(1-x^{2})x}{1-2x^{2}} & \frac{(1-x^{2})x^{2}}{1-2x^{2}} & \frac{(1-x^{2})^{2}}{1-2x^{2}} & \frac{(1+x)x^{3}}{1-2x^{2}}\\
0 & 0 & 0 & 0 & 1
\end{bmatrix},
\]
and then we convert the ordinary generating functions to exponential
generating functions to obtain
\[
\begin{bmatrix}1 & -\frac{2x+\sqrt{2}\sinh(\sqrt{2}x)}{4} & -\frac{1-x^{2}-\cosh(\sqrt{2}x)}{4} & -\frac{1+x^{2}-\cosh(\sqrt{2}x)}{4} & -\frac{1+2x+x^{2}-\sqrt{2}\sinh(\sqrt{2}x)-\cosh(\sqrt{2}x)}{4}\\
0 & \frac{1+\cosh(\sqrt{2}x)}{2} & -\frac{2x+\sqrt{2}\sinh(\sqrt{2}x)}{4} & \frac{2x-\sqrt{2}\sinh(\sqrt{2}x)}{4} & \frac{2+2x-\sqrt{2}\sinh(\sqrt{2}x)-2\cosh(\sqrt{2}x)}{4}\\
0 & -\frac{2x+\sqrt{2}\sinh(\sqrt{2}x)}{4} & \frac{3+x^{2}+\cosh(\sqrt{2}x)}{4} & -\frac{1+x^{2}-\cosh(\sqrt{2}x)}{4} & -\frac{1+2x+x^{2}-\sqrt{2}\sinh(\sqrt{2}x)-\cosh(\sqrt{2}x)}{4}\\
0 & -\frac{2x+\sqrt{2}\sinh(\sqrt{2}x)}{4} & -\frac{1-x^{2}-\cosh(\sqrt{2}x)}{4} & \frac{3-x^{2}+\cosh(\sqrt{2}x)}{4} & -\frac{1+2x+x^{2}-\sqrt{2}\sinh(\sqrt{2}x)-\cosh(\sqrt{2}x)}{4}\\
0 & 0 & 0 & 0 & 1
\end{bmatrix}.
\]
The $(1,5)$ entry of the inverse matrix gives us 
\[
B_{2}(x)=\frac{x^{2}-x(4+x)\cosh(\sqrt{2}x)+2\sqrt{2}\sinh(\sqrt{2}x)}{4+4\cosh(\sqrt{2}x)-2\sqrt{2}x\sinh(\sqrt{2}x)}.
\]

Now we can obtain $B(x)$ by taking $2B_{1}(x)+2B_{2}(x)$, but will
need to add an additional term to account for the increasing and decreasing
permutations which were excluded from the above computations. Since
$e^{x}$ is the exponential generating function for increasing permutations
and also for decreasing permutations, we add $2e^{x}$ but also substract
$x+1$ because the empty permutation and the length 1 permutation
are counted twice by $2e^{x}$.

Therefore, 
\[
B(x)=2B_{1}(x)+2B_{2}(x)+2e^{x}-x-1
\]
which simplifies to 
\[
B(x)=(1+x)\frac{2+2\cosh(\sqrt{2}x)+\sqrt{2}x\sinh(\sqrt{2}x)}{2+2\cosh(\sqrt{2}x)-\sqrt{2}x\sinh(\sqrt{2}x)}.
\]

\subsection{Permutations with all peaks and valleys odd}

Although the exponential generating function $C(x)$ for permutation
with all peaks and valleys odd can be obtained in the same way via
run networks, it is easier to derive it from a combinatorial identity
relating the odd and even terms of the sequence $\{c_{n}\}_{n\geq0}$,
which we do here. 

Notice that the generating function $B(x)$ for permutations with
all peaks and valleys even splits nicely into even and odd parts:
\[
B_{\mathrm{even}}(x)\coloneqq\sum_{n=0}^{\infty}b_{2n}\frac{x^{2n}}{(2n)!}=\frac{2+2\cosh(\sqrt{2}x)+\sqrt{2}x\sinh(\sqrt{2}x)}{2+2\cosh(\sqrt{2}x)-\sqrt{2}x\sinh(\sqrt{2}x)}
\]
and 
\[
B_{\mathrm{odd}}(x)\coloneqq\sum_{n=0}^{\infty}b_{2n+1}\frac{x^{2n+1}}{(2n+1)!}=x\frac{2+2\cosh(\sqrt{2}x)+\sqrt{2}x\sinh(\sqrt{2}x)}{2+2\cosh(\sqrt{2}x)-\sqrt{2}x\sinh(\sqrt{2}x)}.
\]
We immediately deduce from $B_{\mathrm{odd}}(x)=xB_{\mathrm{even}}(x)$
an identity relating the even and odd terms of $\{b_{n}\}_{n\geq0}$. 
\begin{prop}
\label{p-bevenodd} For all $n\geq0$, $b_{2n+1}=(2n+1)b_{2n}$.
\end{prop}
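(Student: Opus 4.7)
The plan is to exploit the algebraic form of $B(x)$ already computed in the previous subsection. Write
\[
F(x) \coloneqq \frac{2+2\cosh(\sqrt{2}x)+\sqrt{2}x\sinh(\sqrt{2}x)}{2+2\cosh(\sqrt{2}x)-\sqrt{2}x\sinh(\sqrt{2}x)},
\]
so that $B(x)=(1+x)F(x)$. I would first observe that $F(x)$ is an even function of $x$: indeed $\cosh(\sqrt{2}x)$ is even, and $x\sinh(\sqrt{2}x)$ is a product of two odd functions and hence even, so both the numerator and denominator of $F(x)$ are even.

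It then follows from $B(x)=F(x)+xF(x)$, and the fact that multiplication by $x$ swaps even and odd parts, that $B_{\mathrm{even}}(x)=F(x)$ and $B_{\mathrm{odd}}(x)=xF(x)$. Combining these two equalities gives the single relation $B_{\mathrm{odd}}(x)=xB_{\mathrm{even}}(x)$, which is already announced in the paragraph preceding the proposition.

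Finally I would extract the coefficient of $x^{2n+1}$ on both sides. On the left one obtains $b_{2n+1}/(2n+1)!$, while on the right one obtains $b_{2n}/(2n)!$. Clearing denominators yields $b_{2n+1}=(2n+1)\,b_{2n}$, as desired.

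There is essentially no obstacle here: the entire argument is a short coefficient comparison, with the only substantive observation being the parity of $F(x)$. If one wished, one could seek a combinatorial proof by exhibiting a bijection between $(2n+1)$-permutations with all peaks and valleys even and pairs consisting of a $2n$-permutation of the same type together with a position in $[2n+1]$ (plausibly given by insertion/removal of a distinguished letter such as the maximum), but since the paper only asserts the identity is ``immediately deduced'' from $B_{\mathrm{odd}}(x)=xB_{\mathrm{even}}(x)$, the generating-function argument above is the natural route.
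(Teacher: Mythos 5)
Your generating-function argument is correct: $F(x)$ is indeed even (the numerator and denominator are sums of the even functions $\cosh(\sqrt{2}x)$ and $x\sinh(\sqrt{2}x)$), so $B(x)=(1+x)F(x)$ has even part $F(x)$ and odd part $xF(x)$, and extracting the coefficient of $x^{2n+1}$ from $B_{\mathrm{odd}}(x)=xB_{\mathrm{even}}(x)$ gives $b_{2n+1}/(2n+1)!=b_{2n}/(2n)!$; there is also no circularity, since $B(x)$ was fully derived via the run-network computation before the proposition is stated. However, this is exactly the deduction the paper dispatches in the sentence preceding the proposition, and the proof the paper actually writes out is the other route you only gesture at: a bijection sending a pair $(\pi,m)$, where $\pi$ is a $2n$-permutation with all peaks and valleys even and $m\in[2n+1]$, to the $(2n+1)$-permutation obtained by incrementing every letter $k\geq m$ of $\pi$ and appending $m$ at the end (note the appended letter is arbitrary, not the maximum as you suggest; appending only the maximum would count each $\pi$ once rather than $2n+1$ times). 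The difference in approach matters one proposition later: the paper reuses essentially the same bijection to prove $c_{2n}=2nc_{2n-1}$ (Proposition \ref{p-cevenodd}), where your coefficient-comparison route is unavailable because $C(x)$ has not yet been computed --- indeed the paper derives $C(x)$ \emph{from} that proposition rather than the reverse. So your argument buys brevity for $\{b_n\}$ at the cost of leaning on the analytic computation of $B(x)$, while the paper's bijection buys a computation-free, combinatorial proof that transfers to the odd-parity sequence and powers the derivation of $C(x)$.
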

The identity can also be seen from a simple bijection.
\begin{proof}
Let $\pi$ be any $2n$-permutation with all peaks and valleys even,
and pick any $m\in[2n+1]$. Let $\pi^{\prime}$ be the permutation
obtained by replacing the letter $k$ with $k+1$ for every $k\geq m$
in $\pi$, and attaching $m$ to the end of $\pi$. For example, given
$\pi=1432$ and $m=3$, we have $\pi^{\prime}=15423$. Then, $\pi^{\prime}$
is a $(2n+1)$-permutation with all peaks and valleys even. To obtain
$(\pi,m)$ from $\pi^{\prime}$, simply take $m=\pi_{2n+1}^{\prime}$
and apply the reduction map to the word formed by the first $2n$
letters of $\pi^{\prime}$ to get $\pi$.
\end{proof}
Essentially the same bijection gives us an analogous identity for
permutations with all peaks and valleys odd.
\begin{prop}
\label{p-cevenodd} For all $n\geq1$, $c_{2n}=2nc_{2n-1}$.\end{prop}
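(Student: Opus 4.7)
The plan is to mimic the bijective proof of Proposition \ref{p-bevenodd} essentially verbatim, just with the parities shifted and the ground set for $m$ shrunk by one. Specifically, I propose a bijection between $\{(\pi,m):\pi\text{ is a }(2n-1)\text{-permutation with all peaks and valleys odd},\ m\in[2n]\}$ and the set of $2n$-permutations with all peaks and valleys odd.

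Given such a pair $(\pi,m)$, form $\pi'$ of length $2n$ exactly as before: replace every letter $k\geq m$ in $\pi$ by $k+1$, and attach $m$ to the end. I then need to verify that $\pi'$ has all peaks and valleys odd. Relabeling preserves the relative order of the first $2n-1$ letters, so for $i\in\{2,\dots,2n-2\}$ the triple $(\pi'_{i-1},\pi'_i,\pi'_{i+1})$ has the same pattern as $(\pi_{i-1},\pi_i,\pi_{i+1})$, and hence position $i$ is a peak/valley of $\pi'$ iff it is a peak/valley of $\pi$; by hypothesis those happen only at odd $i$. The only genuinely new positions to inspect are $i=2n-1$ and $i=2n$: position $2n$ is the terminal position of $\pi'$ and is thus neither a peak nor a valley, and position $2n-1$ is odd, so any peak/valley created there is allowed. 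This handles the forward direction.

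For the inverse, given a $2n$-permutation $\pi'$ with all peaks and valleys odd, set $m=\pi'_{2n}$, delete the last letter, and reduce; call the result $\pi$. The same argument in reverse shows $\pi$ has all peaks and valleys odd: positions $2,\dots,2n-2$ are unaffected (reduction is order-preserving), position $2n-1$ is now terminal and so cannot be a peak/valley of $\pi$, and $\pi$ has no position $2n$ at all. The two maps are manifestly mutually inverse, so the bijection gives $c_{2n}=2n\,c_{2n-1}$.

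The main (indeed, the only) delicate point is the parity bookkeeping at position $2n-1$. In Proposition \ref{p-bevenodd} the newly created peak/valley sits at an even position and must be allowed in the even case; here, by decreasing the length by one, that position shifts to $2n-1$, which is odd, so the same construction works in the odd case. Once this observation is made the verification is routine, and the boundary case $n=1$ (giving $c_2=2c_1=2$) agrees with the table.
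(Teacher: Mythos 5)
Your proof is correct and is essentially the paper's own argument: the paper proves Proposition \ref{p-cevenodd} by applying the identical insert-at-the-end bijection from Proposition \ref{p-bevenodd}, exactly as you do. Your explicit parity check at position $2n-1$ (odd, hence permitted in this case) is the one point the paper leaves implicit, and you have verified it correctly.
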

\begin{proof}
Pick any $(2n-1)$-permutation with all peaks and valleys odd, and
any $m\in[2n]$. Applying the same procedure in the proof of Proposition
\ref{p-bevenodd} yields a $(2n)$-permutation with all peaks and
valleys odd, and we reverse the procedure in the same way to obtain
$\pi$ and $m$.
\end{proof}
Proposition \ref{p-cevenodd}, along with the fact that $b_{2n}=c_{2n}$
for all $n\geq0$, allows us to deduce that {\allowdisplaybreaks
\begin{align*}
C_{\mathrm{odd}}(x) & \coloneqq\sum_{n=1}^{\infty}c_{2n-1}\frac{x^{2n-1}}{(2n-1)!}\\
 & =\sum_{n=1}^{\infty}\frac{c_{2n}}{2n}\frac{x^{2n-1}}{(2n-1)!}\\
 & =\frac{1}{x}\sum_{n=1}^{\infty}b_{2n}\frac{x^{2n}}{(2n)!}\\
 & =\frac{1}{x}\left(\frac{2+2\cosh(\sqrt{2}x)+\sqrt{2}x\sinh(\sqrt{2}x)}{2+2\cosh(\sqrt{2}x)-\sqrt{2}x\sinh(\sqrt{2}x)}-1\right)\\
 & =\frac{2\sqrt{2}\sinh(\sqrt{2}x)}{2+2\cosh(\sqrt{2}x)-\sqrt{2}x\sinh(\sqrt{2}x)}.
\end{align*}
}Furthermore, 
\[
C_{\mathrm{even}}(x)\coloneqq\sum_{n=0}^{\infty}c_{2n}\frac{x^{2n}}{(2n)!}=B_{\mathrm{even}}(x),
\]
so we have 
\begin{align*}
C(x) & =C_{\mathrm{odd}}(x)+C_{\mathrm{even}}(x)\\
 & =\frac{2+2\cosh(\sqrt{2}x)+\sqrt{2}(2+x)\sinh(\sqrt{2}x)}{2+2\cosh(\sqrt{2}x)-\sqrt{2}x\sinh(\sqrt{2}x)},
\end{align*}
which completes the proof of Theorem \ref{t-bcegfs}.

Permutations with all peaks and valleys odd are closely related to
``balanced permutations'', which are defined in terms of standard
skew Young tableaux called ``balanced tableaux''. In fact, balanced
permutations of odd length are precisely permutations of odd length
with all peaks and valleys odd, counted by $\{c_{2n+1}\}_{n\geq0}$.
Gessel and Greene \cite{greene} gave the exponential generating function
for balanced permutations, and by comparing generating functions,
showed that 
\begin{equation}
d_{2n+1}=2^{n}c_{2n+1}\label{e-lacroix}
\end{equation}
for all $n\geq0$, where $d_{n}$ is the number of $n$-permutations
with all valleys odd (and with no parity restrictions on peaks) which
were previously studied by Gessel \cite{Gessel1991}. A bijective
proof of (\ref{e-lacroix}) was later given by La Croix \cite{LaCroix2006}.

\subsection{Two inequalities}

We end this section by verifying two observations made at the beginning
of this section relating the sequences $\{a_{n}\}_{n\geq0},$ $\{b_{n}\}_{n\geq0}$,
and $\{c_{n}\}_{n\geq0}$.
\begin{prop}
For all $n\geq1$, $c_{2n+1}<b_{2n+1}$.\end{prop}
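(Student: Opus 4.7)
The plan is to leverage Proposition \ref{p-bevenodd}, which gives $b_{2n+1}=(2n+1)b_{2n}$, by constructing an injection from $(2n+1)$-permutations counted by $c_{2n+1}$ into a set of size $(2n+1)b_{2n}$ and then showing that this injection fails to be surjective whenever $n\geq 1$. Concretely, I will inject into $B_{2n}\times[2n+1]$, writing $B_{2n}$ for the set of $2n$-permutations with all peaks and valleys even.

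Define the injection $\phi$ by sending $\pi=\pi_{1}\pi_{2}\cdots\pi_{2n+1}$ (with all peaks and valleys odd) to $(\sigma,\pi_{1})$, where $\sigma\in\mathfrak{S}_{2n}$ is the reduction of the word $\pi_{2}\pi_{3}\cdots\pi_{2n+1}$. Since reduction preserves relative order, for each $2\leq j\leq 2n-1$, position $j$ of $\sigma$ is a peak or valley if and only if position $j+1$ of $\pi$ is. The peaks and valleys of $\pi$ lie in the odd interior positions $\{3,5,\dots,2n-1\}$, so those of $\sigma$ lie in the corresponding even positions $\{2,4,\dots,2n-2\}$; hence $\sigma\in B_{2n}$. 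Injectivity is immediate, since $(\sigma,m)$ determines $\pi$ uniquely: un-reduce $\sigma$ by incrementing every letter $\geq m$ by one, and then prepend $m$.

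The substantive step—and the main obstacle—is verifying non-surjectivity of $\phi$ when $n\geq 1$. A pair $(\sigma,m)\in B_{2n}\times[2n+1]$ lies in the image of $\phi$ precisely when the reconstructed $\pi$ has no peak or valley at the (even) position $2$, because all positions $\geq 3$ of $\pi$ automatically inherit the correct parity from $\sigma\in B_{2n}$ via the above correspondence, and positions $1$ and $2n+1$ are never peaks or valleys. To exhibit a pair outside the image for any $n\geq 1$, take $\sigma=12\cdots(2n)$, which has no peaks or valleys and thus trivially lies in $B_{2n}$, and take $m=2$. Un-reducing gives the sequence $1,3,4,\dots,2n+1$, so the reconstructed permutation is $\pi=2,1,3,4,\dots,2n+1$, and $\pi_{1}=2>\pi_{2}=1<\pi_{3}=3$ produces a valley at the even position $2$. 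Hence $(\sigma,m)$ is not in the image of $\phi$, and we conclude $c_{2n+1}<(2n+1)b_{2n}=b_{2n+1}$.
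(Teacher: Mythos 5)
Your proof is correct, and it is worth noting how it relates to the paper's argument. The paper proves the inequality with a one-line cyclic shift: $\mathtt{shift}(\pi)=\pi_{2}\cdots\pi_{2n+1}\pi_{1}$ injects $(2n+1)$-permutations with all peaks and valleys odd into those with all peaks and valleys even (every peak or valley moves from an odd position to an even one, and the only possible new extremum, at position $2n$, is even), with non-surjectivity witnessed by the increasing permutation $12\cdots(2n+1)$, whose shift-preimage $(2n+1)12\cdots(2n)$ has an even valley. You instead inject into $B_{2n}\times[2n+1]$ via delete-the-first-letter-and-reduce and invoke Proposition \ref{p-bevenodd} to identify $(2n+1)b_{2n}$ with $b_{2n+1}$. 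These are in fact the same map in disguise: composing the paper's $\mathtt{shift}$ with the bijection used to prove Proposition \ref{p-bevenodd} (remove the last letter, record its value, reduce) gives exactly your $\phi$. What your route buys is a cleaner characterization of the image --- a pair $(\sigma,m)$ is hit precisely when the reconstructed permutation has no peak or valley at position $2$ --- which makes the non-surjectivity step transparent and locates the obstruction at a single position; the cost is the dependence on Proposition \ref{p-bevenodd} and the bookkeeping of the reduction map, where the paper's direct shift needs neither. Both witnesses check out (your $(\sigma,m)=(12\cdots(2n),\,2)$ reconstructs $\pi=2,1,3,\dots,2n+1$ with an even valley at position $2$), and your parity bookkeeping --- peaks and valleys of $\pi$ in $\{3,5,\dots,2n-1\}$ corresponding to positions $\{2,4,\dots,2n-2\}$ of $\sigma$ --- is accurate.
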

\begin{proof}
Fix $n\geq1$. We provide an injection $\mathtt{shift}$ from the
set of $(2n+1)$-permutations with all peaks and valleys odd to the
set of $(2n+1)$-permutations with all peaks and valleys even.

Let $\pi=\pi_{1}\pi_{2}\cdots\pi_{2n+1}$ have all peaks and valleys
odd, and let $\mathtt{shift}(\pi)=\pi_{2}\cdots\pi_{2n+1}\pi_{1}$.
All of the peaks and valleys of $\pi$ remain peaks and valleys of
$\mathtt{shift}(\pi)$, but their positions are shifted by 1, and
the only possible new valley or peak is given by $\pi_{2n+1}$, which
has an even position in $\mathtt{shift}(\pi)$. Therefore, all of
the peaks and valleys of $\mathtt{shift}(\pi)$ are even, and $\mathtt{shift}$
is injective because it is clearly reversible.

However, $\mathtt{shift}$ is not a surjection. The increasing permutation
$12\cdots(2n)(2n+1)$ has all peaks and valleys even, but $(2n+1)12\cdots(2n)$
does not have all peaks and valleys odd.\end{proof}
\begin{prop}
For all $n\geq4$, $b_{n}<a_{n}$.\end{prop}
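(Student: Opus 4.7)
The plan is to inject the set of $b_n$-permutations into the set of $a_n$-permutations and exhibit an $a_n$-permutation outside the image, yielding $b_n<a_n$.

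Define $\phi(\pi)$ by: at each peak $p$ of $\pi$, transpose the entries $\pi_p$ and $\pi_{p+1}$ when $p\leq n-2$, and transpose $\pi_{n-2}$ and $\pi_{n-1}$ when $p=n-1$ (the latter case arising only when $n$ is odd, since $\pi\in b_n$ forces $p$ to be even). Because consecutive peaks of a $b_n$-permutation alternate with even-position valleys and therefore lie at least four positions apart, the prescribed transpositions act on disjoint position pairs.

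First I would verify $\phi(\pi)\in a_n$ by a local analysis around each peak. At an interior peak $p\leq n-2$, the swap creates a peak at the odd position $p+1$---using the inequality $\pi_{p+1}>\pi_{p+2}$, which holds because position $p+1$ is odd and hence cannot be a valley in $\pi$---while positions $p-1$ and $p$ either remain non-extremal or acquire an odd-position peak at $p-1$ or an even-position valley at $p$. At the boundary peak $p=n-1$, the modified swap produces a peak at the odd position $n-2$ and possibly a valley at the even position $n-1$. Original valleys of $\pi$ at even positions are preserved. The special rule at $p=n-1$ is essential to avoid the collision $\phi(1\,2\cdots n)=\phi(1\,2\cdots(n-2)\,n\,(n-1))=1\,2\cdots n$ that would otherwise occur under a uniform rule when $n$ is odd.

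For injectivity, suppose $\phi(\pi_1)=\phi(\pi_2)=\sigma$ with distinct peak sets $S_1,S_2$, and let $p$ be the least element of $S_1\triangle S_2$, say $p\in S_1\setminus S_2$. The requirements that neither $\pi_1$ nor $\pi_2$ has an odd-position peak or valley at position $p+1$ impose incompatible orderings among $\sigma_p,\sigma_{p+1},\sigma_{p+2}$; when $p+2\in S_2$, the inequalities propagate to further entries of $\sigma$ until they contradict themselves within a bounded window. Carrying out this case analysis cleanly---in particular propagating the inequalities and handling the boundary rule---will be the main technical obstacle of the proof.

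Finally, to show $\phi$ is not surjective, I would exhibit $\sigma=2\,1\,4\,3\,5\,6\cdots n$, whose only extrema are a valley at position $2$, a peak at position $3$, and (for $n\geq 5$) a valley at position $4$; hence $\sigma\in a_n$. Any pre-image $\pi$ of $\sigma$ under $\phi$ corresponds to reversing the transpositions at some subset of peak positions drawn from $\{2,4\}$, since positions beyond $4$ are identical to the identity tail of $\sigma$ and cannot influence the extrema near positions $3$--$5$. Checking each of the four candidate subsets---$\emptyset$, $\{2\}$, $\{4\}$, and $\{2,4\}$---shows that every resulting candidate acquires an odd-position valley at position $3$ or $5$ or fails to exhibit the required peak at a supposed peak position, so none lies in $b_n$. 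We conclude $b_n=|\phi(b_n)|<|a_n|=a_n$, as desired.
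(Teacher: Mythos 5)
Your construction coincides exactly with the paper's map (there called $\mathtt{pkshift}$): the same transpositions at peaks, the same boundary rule at $p=n-1$, essentially the same local analysis around peaks, and even the same non-surjectivity witness $2143\,56\cdots n$. The genuine gap is injectivity. You reduce it to a case analysis on the least element $p$ of the symmetric difference of the peak sets and then concede that carrying out this analysis ``will be the main technical obstacle of the proof.'' That step is not a detail --- it is the heart of the proposition, and as sketched there is no argument that the propagation of inequalities terminates: once $p+2$ lies in the other peak set, a disagreement at $p$ can in principle force constraints at $p+2,p+4,\dots$ all the way out to the boundary rule, and you assert without proof that a contradiction appears ``within a bounded window.'' The paper avoids this entirely by exhibiting an explicit inverse algorithm: scan the image from right to left and, at each position that is \emph{currently} a peak $k$, swap the letters in positions $k-1$ and $k$ (positions $n-2$ and $n-1$ when $k=n-2$). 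The local analysis you already performed shows that each even peak $i$ of $\pi$ shifts to the odd position $i+1$, with the only incidental extrema being a possible peak at $i-1$ and valley at $i$, and these vanish as soon as the swap at $i+1$ is undone; so the right-to-left sweep recovers $\pi$ and injectivity is immediate, with nothing to propagate.

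A smaller but real flaw sits in your non-surjectivity step: enumerating the candidate preimages as the subsets $\emptyset,\{2\},\{4\},\{2,4\}$ implicitly reverses every transposition uniformly (a peak at $p$ means positions $p,p+1$ were swapped), which is wrong precisely when the peak at $4$ is the boundary peak, i.e.\ when $n=5$. There a preimage peak at $4=n-1$ would have swapped positions $3$ and $4$, so the relevant candidate is $21345$ --- which $\phi$ fixes, hence it is not a preimage of $21435$ --- rather than $21453$, which does not map to $\sigma$ at all under the boundary rule. This is exactly why the paper splits the verification into the cases $n=4$, $n=5$, and $n>5$ (and note the subset $\{4\}$ is vacuous at $n=4$). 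The conclusion survives in every case, but your case list as written is incomplete for $n=5$. Adopt the inverse-algorithm argument for injectivity and patch the small-$n$ enumeration, and the proof closes.
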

\begin{proof}
Fix $n\geq4$. As before, we give a suitable injection, this time
from the set of $n$-permutations with all peaks and valleys even
to the set of $n$-permutations with all valleys even and all peaks
odd.

Let $\pi=\pi_{1}\pi_{2}\cdots\pi_{n}$ have all peaks and valleys
even, and let $\mathtt{pkshift}(\pi)$ be the $n$-permutation obtained
by the following algorithm. We iterate through the letters of $\pi$
from left to right, and for each peak $k<n-1$ of $\pi$, we switch
the letters $\pi_{k}$ and $\pi_{k+1}$. If $n-1$ is a peak of $\pi$,
then we switch $\pi_{n-2}$ and $\pi_{n-1}$. For example, $\pi=287134596$
is mapped to $\mathtt{pkshift}(\pi)=278134956$.

We must show that $\mathtt{pkshift}(\pi)$ has all valleys even and
all peaks odd. Since all peaks and valleys of $\pi$ are even, each
peak must be at least 4 greater than the previous peak, so we can
look at the ``local behavior'' of $\mathtt{pkshift}$ around the peaks
of $\pi$. Suppose that $i<n-1$ is a peak of $\pi$ and is thus even.
Then, 
\[
\pi_{i-2}<\pi_{i-1}<\pi_{i}>\pi_{i+1}>\pi_{i+2}
\]
because $\pi$ has all peaks and valleys even. (It is possible that
$\pi_{i-2}$ does not exist.) Then this segment of $\pi$ is mapped
to 
\[
\pi_{i-2}<\pi_{i-1}\lessgtr\pi_{i+1}<\pi_{i}>\pi_{i+2}
\]
by $\mathtt{pkshift}$ if $i\neq n-1$. If $n-1$ is a peak of $\pi$,
then 
\[
\pi_{n-3}<\pi_{n-2}<\pi_{n-1}>\pi_{n}
\]
is mapped to 
\[
\pi_{n-3}<\pi_{n-1}>\pi_{n-2}\lessgtr\pi_{n}
\]
by $\mathtt{pkshift}$.

These segments may only intersect each other at the leftmost or rightmost
positions, which are not changed, so they are completely compatible.
All of the letters within two positions of the peaks of $\pi$ are
affected in this manner, and none of the other letters are changed. 

In particular, every valley of $\pi$ is unchanged and becomes a valley
of $\mathtt{pkshift}(\pi)$, every peak $i<n-1$ of $\pi$ becomes
a peak $i+1$ of $\mathtt{pkshift}(\pi)$, and if $n-1$ is a peak
of $\pi$ then it becomes a peak $n-2$ of $\mathtt{pkshift}(\pi)$.
Moreover, given a peak $i<n-1$ of $\pi$ and if $\pi_{i-1}>\pi_{i+1}$,
then $i+1$ becomes a valley $i$ of $\mathtt{pkshift}(\pi)$, and
if additionally $\pi_{i-2}$ exists, then $i-1$ becomes a peak of
$\mathtt{pkshift}(\pi)$. If $n-1$ is a peak of $\pi$ and $\pi_{n-2}<\pi_{n}$,
then $n-1$ becomes a valley of $\mathtt{pkshift}(\pi)$. All of these
peaks and valleys are in accordance with the required parity conditions,
and since no other peaks and valleys are introduced, it follows that
$\mathtt{pkshift}$ is a well-defined map into the set of $n$-permutations
with all peaks odd and all valleys even.

This map can easily be reversed. Iterate through the letters of $\mathtt{pkshift}(\pi)$
from right to left. If $n-2$ is a peak of $\mathtt{pkshift}(\pi)$,
then switch $\pi_{n-2}$ and $\pi_{n-1}$. For all other peaks $k$
of $\mathtt{pkshift}(\pi)$, we switch $\pi_{k-1}$ and $\pi_{k}$.
This algorithm eliminates all peaks and valleys that were added and
restores the peaks that were shifted, so it is evident that $\pi$
is recovered from $\mathtt{pkshift}(\pi)$. Hence, $\mathtt{pkshift}$
is an injection. 

As before, $\mathtt{pkshift}$ is not a surjection. Take the $n$-permutation
given by concatenating $2143$ with $56\cdots n$ (the latter is empty
if $n=4$), which has all peaks odd and all valleys even. For $n=4$
and $n>5$, applying the inverse algorithm yields $241356\cdots n$,
which has an odd valley. For $n=5$, the inverse algorithm yields
$21345$, which has all valleys even and no peaks. So, applying $\mathtt{pkshift}$
to $21345$ gives $21345$, which was not the original permutation
$21435$. Thus the result is proven.
\end{proof}

\section{Counting permutations by run-based statistics}

\subsection{Introduction }

Let $\des(\pi)$ and $\altdes(\pi)$ be the number of descents and
alternating descents of a permutation $\pi$, respectively. It was
demonstrated in \cite{Gessel2014} that by setting all weights equal
to $t$ in the original version of the run theorem and applying the
homomorphism $\Phi$, we obtain the famous exponential generating
function 
\[
1+\sum_{n=1}^{\infty}A_{n}(t)\frac{x^{n}}{n!}=\frac{1-t}{1-te^{(1-t)x}}
\]
for the Eulerian polynomials 
\[
A_{n}(t)=\sum_{\pi\in\mathfrak{S}_{n}}t^{\des(\pi)+1},
\]
counting permutations by descents. If we apply $\hat{\Phi}$ instead,
then we get an analogous result 
\[
1+\sum_{n=1}^{\infty}\hat{A}_{n}(t)\frac{x^{n}}{n!}=\frac{1-t}{1-t(\sec((1-t)x)+\tan((1-t)x))}
\]
(previously discovered in an equivalent form by Chebikin \cite[Theorem 4.2]{chebikin})
for the ``alternating Eulerian polynomials'' 
\[
\hat{A}_{n}(t)=\sum_{\pi\in\mathfrak{S}_{n}}t^{\altdes(\pi)+1},
\]
counting permutations by alternating descents.

In general, given a permutation statistic $\st$, we define the $\st$-polynomials
to be 
\[
P_{n}^{\st}(t)\coloneqq\sum_{\pi\in\mathfrak{S}_{n}}t^{\st(\pi)}
\]
for $n\geq0$. So, for instance, the Eulerian polynomials and alternating
Eulerian polynomials correspond to the polynomials for the number
of increasing runs and the number of alternating runs, respectively.
Then the exponential generating functions 
\[
P^{\st}(t,x)\coloneqq\sum_{n=0}^{\infty}P_{n}^{\st}(t)\frac{x^{n}}{n!}
\]
are bivariate generating functions counting permutations by length
and the statistic $\st$.

In this section, we demonstrate how the generating functions $P^{\st}(t,x)$
for many statistics can be obtained by applying the generalized run
theorem on the following run network, which we call $(G,P):$\begin{center}
\begin{tikzpicture}[->,>=stealth',shorten >=1pt,auto,node distance=3cm,   thick,main node/.style={circle,fill=blue!20,draw,font=\sffamily}]

\node[main node] (1) {1};
\node[main node] (2) [right of =1] {2};
\node[main node] (3) [right of =2] {3};

\path[every node/.style={font=\sffamily\small}]     
(1) edge [below] node {$\mathbb{P}$} (2)    
(2) edge [loop above] node {$\mathbb{P}$} (2)
edge [below] node {$\mathbb{P}$} (3);
\end{tikzpicture}
\end{center}The descent compositions of all non-increasing permutations are given
by walks from 1 to 3 in $(G,P)$, but note that we distinguish the
initial run and the final run in this run network. Indeed, the permutation
statistics that we consider in this section are all determined by
the number of non-initial and non-final long increasing runs, as well
as whether the permutation begins and ends with a short increasing
run or a long increasing run. Hence, by assigning appropriate weights
to the letters of $P$ in this run network, the generalized run theorem
yields refined results counting permutations by these statistics.

Not every statistic that we consider requires three vertices in our
run network. For example, only two vertices are required if we only
need to distinguish the initial run or the final run but not both,
and only one vertex is required if we do not need to distinguish the
initial run or the final run. We will still use the 3-vertex run network
$(G,P)$ in the former case, since it eliminates the need for us to
define two 2-vertex run networks (one for distinguished initial runs,
and one for distinguished final runs) and the computation is no more
difficult using a computer algebra system such as Maple. However,
the latter case does not require a run network at all, so we will
simply apply the original version of the run theorem (Theorem \ref{t-runs}).

This general approach can also be used to find multivariate generating
functions giving the joint distribution of two or more of these statistics,
although we do not do this here.

We note that the result of applying the generalized run theorem to
the 3-vertex run network $(G,P)$ is essentially a weighted version
of Theorem 6.12 of Gessel \cite{gessel-thesis}, which gives formulae
for counting words with distinguished initial run and final run, and
is similar to results given by Jackson and Aleliunas \cite[Sections 10-12]{MR0450080}.
See also Goulden and Jackson \cite[Theorem 4.2.19]{MR702512}.

\subsection{Counting permutations by peaks and variations}

The first two statistics that we consider are the number of peaks
and the number of valleys of a permutation. By taking complements,
it is immediate that these two statistics are equidistributed on $\mathfrak{S}_{n}$,
so it suffices to find the bivariate generating function for peaks.
\begin{thm}
Let $\pk(\pi)$ be the number of peaks of $\pi$. Then, 
\[
P^{\pk}(t,x)=\frac{\sqrt{1-t}\cosh(x\sqrt{1-t})}{\sqrt{1-t}\cosh(x\sqrt{1-t})-\sinh(x\sqrt{1-t})}.
\]

\end{thm}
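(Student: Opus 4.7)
The plan is to translate $\pk$ into a weighted count of increasing runs and then apply the generalized run theorem (Theorem \ref{t-runsm}) to the $3$-vertex network $(G,P)$ just introduced. The key observation is purely combinatorial: $i$ is a peak of $\pi$ if and only if $i$ is the right endpoint of an increasing run of length at least $2$ that is not the last run of $\pi$. Hence $\pk(\pi)$ equals the number of non-final long increasing runs of $\pi$, so weighting each non-final long run by $t$ and every other run by $1$ gives each permutation total weight $t^{\pk(\pi)}$.

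To realize this in $(G,P)$, recall that walks from $1$ to $3$ enumerate the descent compositions of non-increasing permutations, with the $(1,2)$ edge encoding the first run, the loop at $2$ the middle runs, and the $(2,3)$ edge the final run. I would set
\[
w_a^{(1,2)} = w_a^{(2,2)} = \begin{cases} t, & a \geq 2, \\ 1, & a = 1, \end{cases}
\qquad w_a^{(2,3)} = 1 \text{ for all } a \geq 1,
\]
with all other $w_a^{(i,j)} = 0$. Applying Theorem \ref{t-runsm} and then the homomorphism $\Phi$ identifies the exponential generating function of non-increasing permutations weighted by $t^{\pk}$ with the $(1,3)$ entry of the inverse of the $3\times 3$ EGF matrix obtained from $(I_3 + W(x))^{-1}$. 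Since the excluded increasing permutations have zero peaks and so contribute $e^x$, adding $e^x$ yields $P^{\pk}(t,x)$.

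The remaining work is mechanical. The nonzero entries of the OGF matrix $W(x)$ are
\[
W^{(1,2)}(x) = W^{(2,2)}(x) = x + \frac{t x^2}{1-x}, \qquad W^{(2,3)}(x) = \frac{x}{1-x},
\]
so $I_3 + W(x)$ is upper triangular and its inverse is explicit, with entries rational in $x$ whose denominators are $1-(1-t)x^2$ times a power of $1-x$. Setting $s = \sqrt{1-t}$ and converting OGFs to EGFs via $1/(1-s^2x^2) \mapsto \cosh(sx)$ and $x/(1-s^2x^2) \mapsto \sinh(sx)/s$ turns these entries into combinations of $\cosh(sx)$, $\sinh(sx)$, $1$, and $e^x$. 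Inverting the resulting EGF matrix, extracting its $(1,3)$ entry, and adding $e^x$ then simplifies to the stated closed form.

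The only real obstacle is algebraic: two matrix inversions together with the OGF-to-EGF conversion are cumbersome enough to benefit from a computer algebra system, though no step is subtle. A less laborious path, producing the same closed form, is to use the $2$-vertex run network whose non-final runs loop at vertex $1$ (weight $t$ if long, $1$ if short) and whose final run is the edge to vertex $2$ (weight $1$); since $\pk$ only sees whether a run is final, this suffices and requires inverting only a $2\times 2$ matrix, after which a single line of simplification produces $\frac{s\cosh(sx)}{s\cosh(sx) - \sinh(sx)}$ directly.
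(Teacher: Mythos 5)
Your proposal is correct and follows essentially the same route as the paper: the identical observation that $\pk(\pi)$ counts non-final long increasing runs, the same weights $w_k^{(1,2)}=w_k^{(2,2)}=t$ for $k\neq 1$ on the $3$-vertex network $(G,P)$, the same matrix inversion and OGF-to-EGF conversion (your $W^{(2,3)}(x)=\frac{x}{1-x}$ equals the paper's $x+\frac{x^2}{1-x}$), and the same final correction of adding $e^x$ for the increasing permutations. Your closing remark that a $2$-vertex network would suffice is also consistent with the paper, which notes explicitly that it retains the $3$-vertex network only for uniformity of computation.
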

Other equivalent formulae have been found, e.g., by Entringer \cite{Entringer1969}
using differential equations, Mendes and Remmel \cite{Mendes} using
the ``homomorphism method'', and Kitaev \cite{Kitaev2007} using the
notion of partially ordered permutation patterns.

The first ten $\pk$-polynomials are given below, and their coefficients
can be found in the OEIS \cite[A008303]{oeis}.

\begin{center}
\begin{tabular}{c|c|c|c}
$n$ & $P_{n}^{\pk}(t)$ & $n$ & $P_{n}^{\pk}(t)$\tabularnewline
\hline 
0 & $1$ & 5 & $16+88t+16t^{2}$\tabularnewline
1 & $1$ & 6 & $32+416t+272t^{2}$\tabularnewline
2 & $2$ & 7 & $64+1824t+2880t^{2}+272t^{3}$\tabularnewline
3 & $4+2t$ & 8 & $128+7680t+24576t^{2}+7936t^{3}$\tabularnewline
4 & $8+16t$ & 9 & $256+31616t+185856t^{2}+137216t^{3}+7936t^{4}$\tabularnewline
\end{tabular}
\par\end{center}
\begin{proof}
Notice that the number of peaks in a permutation is equal to its number
of non-final long increasing runs, as every non-final long increasing
run necessarily ends with a peak and every peak is at the end of a
non-final long increasing run. So, we want to weight every $k\neq1$
in $P_{1,2}$ and $P_{2,2}$ by $t$ in $(G,P)$. Setting $w_{k}^{(1,2)}=w_{k}^{(2,2)}=t$
for all $k\neq1$ (and setting all other nonzero weights to 1) and
then applying Theorem \ref{t-runsm}, we compute 
\begin{eqnarray*}
\left(I_{3}+\begin{bmatrix}0 & x+\frac{tx^{2}}{1-x} & 0\\
0 & x+\frac{tx^{2}}{1-x} & x+\frac{x^{2}}{1-x}\\
0 & 0 & 0
\end{bmatrix}\right)^{-1} & =\begin{bmatrix}1 & -\frac{(1-(1-t)x)x}{1-(1-t)x^{2}} & \frac{(1-(1-t)x)x^{2}}{(1-(1-t)x^{2})(1-x)}\\
0 & \frac{1-x}{1-(1-t)x^{2}} & -\frac{x}{1-(1-t)x^{2}}\\
0 & 0 & 1
\end{bmatrix} & ,
\end{eqnarray*}
and converting the ordinary generating functions to exponential generating
functions gives 
\begin{equation}
\begin{bmatrix}1 & -1+\cosh(x\sqrt{1-t})-\frac{\sinh(x\sqrt{1-t})}{\sqrt{1-t}} & -1-\frac{\sinh(x\sqrt{1-t})}{\sqrt{1-t}}+e^{x}\\
0 & \cosh(x\sqrt{1-t})-\frac{\sinh(x\sqrt{1-t})}{\sqrt{1-t}} & -\frac{\sinh(x\sqrt{1-t})}{\sqrt{1-t}}\\
0 & 0 & 1
\end{bmatrix}.\label{e-peak}
\end{equation}
Since the increasing permutations have exponential generating function
$e^{x}$ and do not have any peaks, we add $e^{x}$ to the $(1,3)$
entry of the inverse matrix of (\ref{e-peak}) to obtain our desired
generating function 
\[
P^{\pk}(t,x)=\frac{\sqrt{1-t}\cosh(x\sqrt{1-t})}{\sqrt{1-t}\cosh(x\sqrt{1-t})-\sinh(x\sqrt{1-t})}.
\]

\end{proof}
Let us now introduce several variations of peaks and valleys in permutations.
We say that $i$ is a \textit{left peak} of a permutation $\pi=\pi_{1}\pi_{2}\cdots\pi_{n}$
if $i$ is a peak of $\pi$ or if $i=1$ and $\pi_{1}>\pi_{2}$; we
say that $i$ is a \textit{right peak} of $\pi$ if $i$ is a peak
of $\pi$ or if $i=n$ and $\pi_{n-1}<\pi_{n}$; and we say that $i$
is a \textit{left-right peak} of $\pi$ if $i$ is a left peak or
a right peak of $\pi$, or if $\pi=1$ and $i=1$. \textit{Left valleys},
\textit{right valleys}, and \textit{left-right valleys} are defined
analogously.

By taking reverses and complements, it is easy to see that the number
of left peaks, right peaks, left valleys, and right valleys are all
equidistributed on $\mathfrak{S}_{n}$. So, we only need to find the
bivariate generating function for one of these statistics, say, right
peaks. Taking the complement shows that the number of left-right peaks
and left-right valleys are equidistributed as well.

We have the following result:
\begin{thm}
Let $\rpk(\pi)$ and $\lrpk(\pi)$ be the number of right peaks and
left-right peaks of $\pi$, respectively. Then 
\[
P^{\rpk}(t,x)=\frac{\sqrt{1-t}}{\sqrt{1-t}\cosh(x\sqrt{1-t})-\sinh(x\sqrt{1-t})}
\]
and 
\[
P^{\lrpk}(t,x)=\frac{\sqrt{1-t}\cosh(x\sqrt{1-t})-(1-t)\sinh(x\sqrt{1-t})}{\sqrt{1-t}\cosh(x\sqrt{1-t})-\sinh(x\sqrt{1-t})}.
\]

\end{thm}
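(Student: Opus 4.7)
The plan is to follow the template used for $\pk$: rewrite each statistic as a count of certain long runs, assign matching weights to the edges of the $3$-vertex run network $(G,P)$, and invoke Theorem~\ref{t-runsm} together with the homomorphism $\Phi$.

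First I would observe the run-theoretic reformulations. Since every peak of a permutation $\pi$ is the endpoint of a non-final long increasing run (and vice versa), we have $\rpk(\pi)=\pk(\pi)+[\pi_{n-1}<\pi_n]$, which equals the total number of long runs of $\pi$ including the final one. For $\lrpk$ the bookkeeping is slightly subtler: the initial run of any non-increasing $\pi$ contributes exactly one left-right peak (its endpoint if it is long, or position $1$ if it is short), each long middle run contributes one, each long final run contributes one, and short middle or final runs contribute none. Consequently the correct weights on $(G,P)$ are as follows. For $\rpk$, set $w_k^{(1,2)}=w_k^{(2,2)}=w_k^{(2,3)}=t$ for $k\geq 2$ and $w_1^{(i,j)}=1$ on each of these edges, yielding edge generating functions $x+\frac{tx^{2}}{1-x}$ throughout. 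For $\lrpk$, set $w_k^{(1,2)}=t$ for every $k\geq 1$ (edge generating function $\frac{tx}{1-x}$), while keeping $w_k^{(2,2)}=w_k^{(2,3)}=t$ for $k\geq 2$ and $w_1^{(2,2)}=w_1^{(2,3)}=1$.

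With these weights in hand, the rest of the argument mirrors the derivation of $P^{\pk}(t,x)$. I would form $I_{3}+W(x)$, invert it over rational functions in $x$ and $t$, apply $\Phi$ entrywise to turn the ordinary generating functions into exponential ones (which is where $\cosh(x\sqrt{1-t})$ and $\sinh(x\sqrt{1-t})$ arise, exactly as in equation~(\ref{e-peak})), and then invert the resulting $3\times 3$ matrix of EGFs to read off the $(1,3)$ entry. This entry counts non-increasing permutations weighted by $t^{\rpk}$ (respectively $t^{\lrpk}$). Finally, since walks from $1$ to $3$ in $(G,P)$ do not capture the increasing permutations, I would add the contributions $1+x+t(e^{x}-1-x)$ for $\rpk$ and $1+t(e^{x}-1)$ for $\lrpk$ (using the convention $\lrpk(1)=1$, so the single-letter permutation carries a factor of $t$).

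The conceptual step is the case analysis that pins down the weights; once this is settled, the matrix inversion and simplification are purely mechanical and well suited to a computer algebra system. The main obstacle I anticipate is therefore the algebraic collapse of the resulting ratios of hyperbolic expressions into the compact forms stated in the theorem, but the shared denominator $\sqrt{1-t}\cosh(x\sqrt{1-t})-\sinh(x\sqrt{1-t})$ already appears in the $\pk$ computation, which is strong evidence that the $(2,2)$ block behaves identically here and that the remaining numerator manipulations clean up as stated.
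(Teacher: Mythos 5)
Your proposal is correct: the reformulations ($\rpk$ counts \emph{all} long increasing runs; for $\lrpk$ the initial run always contributes exactly one left-right peak and otherwise only long runs contribute), the edge weights on $(G,P)$, and the correction terms $1+x+t(e^{x}-1-x)$ and $1+t(e^{x}-1)$ for the increasing permutations all check out (e.g.\ at $n=3$ your $\lrpk$ weights give $4t+2t^{2}$, matching $tP_{3}^{\pk}(t)$). However, the paper takes a lighter route on both halves. For $\rpk$ it exploits precisely the fact you observed---the weighting is uniform, with no need to distinguish initial or final runs---so it invokes the \emph{original} run theorem (Theorem \ref{t-runs}) with $w_{k}=t$ for $k\neq1$: one computes $(1+x+\tfrac{tx^{2}}{1-x})^{-1}=\tfrac{1-x}{1-(1-t)x^{2}}$, converts to the EGF $\cosh(x\sqrt{1-t})-\tfrac{\sinh(x\sqrt{1-t})}{\sqrt{1-t}}$, and takes its reciprocal; no $3\times3$ matrix inversion and no increasing-permutation correction are needed, since single-part and empty compositions are already included with the right weights. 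For $\lrpk$ the paper does no new generating-function computation at all: your case analysis is exactly the content of the identity $\lrpk(\pi)=\mathrm{val}(\pi)+1$ (non-initial long runs end at left-right peaks, and the initial run ends at one regardless of its length), and combining this with the equidistribution of peaks and valleys yields directly
\[
P^{\lrpk}(t,x)=tP^{\pk}(t,x)-t+1,
\]
from which the closed form follows. Your matrix computations would reproduce the same answers---your approach is more uniform and mechanical, well suited to a CAS, while the paper's shortcuts avoid the hyperbolic-simplification step you flagged as the main obstacle and make the structural relation between $P^{\lrpk}$ and $P^{\pk}$ explicit.
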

The first ten $\rpk$-polynomials are given below, and their coefficients
can also be found in the OEIS \cite[A008971]{oeis}. We omit the $\lrpk$-polynomials;
as we shall see shortly, they can be easily characterized in terms
of the $\pk$-polynomials.

\begin{center}
\begin{tabular}{c|c|c|c}
$n$ & $P_{n}^{\rpk}(t)$ & $n$ & $P_{n}^{\rpk}(t)$\tabularnewline
\hline 
0 & $1$ & 5 & $1+58t+61t^{2}$\tabularnewline
1 & $1$ & 6 & $1+179t+479t^{2}+61t^{3}$\tabularnewline
2 & $1+t$ & 7 & $1+543t+3111t^{2}+1385t^{3}$\tabularnewline
3 & $1+5t$ & 8 & $1+1636t+18270t^{2}+19028t^{3}+1385t^{4}$\tabularnewline
4 & $1+18t+5t^{2}$ & 9 & $1+4916t+101166t^{2}+206276t^{3}+50521t^{4}$\tabularnewline
\end{tabular}
\par\end{center}
\begin{proof}
The number of right peaks of a permutation is equal to its total number
of long increasing runs, so we now assign a weight $t$ to every such
run. Thus, setting $w_{k}=t$ for all $k\neq1$ and applying the original
run theorem, we have that 
\[
\left(1+x+\frac{tx^{2}}{1-x}\right)^{-1}=\frac{1-x}{1-(1-t)x^{2}},
\]
whose coefficients have exponential generating function 
\[
\cosh(x\sqrt{1-t})-\frac{\sinh(x\sqrt{1-t})}{\sqrt{1-t}}.
\]
Finally, taking the reciprocal yields 
\begin{align*}
P^{\rpk}(t,x) & =\left(\cosh(x\sqrt{1-t})-\frac{\sinh(x\sqrt{1-t})}{\sqrt{1-t}}\right)^{-1}\\
 & =\frac{\sqrt{1-t}}{\sqrt{1-t}\cosh(x\sqrt{1-t})-\sinh(x\sqrt{1-t})}.
\end{align*}
As for left-right peaks, we claim that the number of left-right peaks
of a permutation $\pi$ is equal to one more than the number of valleys
of $\pi$. The number of valleys of $\pi$ is equal to its number
of non-initial long increasing runs, and every non-initial long increasing
run ends with a left-right peak. In fact, all left-right peaks of
$\pi$ can be found at the end of these runs, but also at the end
of the first increasing run (regardless of length), which accounts
for the difference of 1. Since the number of peaks and number of valleys
are equidistributed, we have that 
\begin{align*}
P^{\lrpk}(t,x) & =tP^{\pk}(t,x)-t+1\\
 & =\frac{\sqrt{1-t}\cosh(x\sqrt{1-t})-(1-t)\sinh(x\sqrt{1-t})}{\sqrt{1-t}\cosh(x\sqrt{1-t})-\sinh(x\sqrt{1-t})}.
\end{align*}
 
\end{proof}
By comparing $P^{\pk}(t,x)$ and $P^{\rpk}(t,x)$, we see the following
relation:
\begin{cor}
The bivariate generating functions $P^{\pk}(t,x)$ and $P^{\rpk}(t,x)$
for the number of peaks and the number of right peaks, respectively,
satisfy 
\[
P^{\pk}(t,x)=P^{\rpk}(t,x)\cosh(x\sqrt{1-t}),
\]
or equivalently, 
\[
P^{\rpk}(t,x)=P^{\pk}(t,x)\sech(x\sqrt{1-t}).
\]

\end{cor}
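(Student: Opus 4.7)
The plan is essentially a one-line algebraic verification using the explicit formulas for $P^{\pk}(t,x)$ and $P^{\rpk}(t,x)$ already established in the preceding two theorems. Both generating functions share the common denominator $\sqrt{1-t}\cosh(x\sqrt{1-t})-\sinh(x\sqrt{1-t})$, while their numerators differ by exactly a factor of $\cosh(x\sqrt{1-t})$. So I would simply divide one formula by the other and observe that
\[
\frac{P^{\pk}(t,x)}{P^{\rpk}(t,x)} = \frac{\sqrt{1-t}\cosh(x\sqrt{1-t})}{\sqrt{1-t}} = \cosh(x\sqrt{1-t}),
\]
from which both displayed identities follow at once (the second being a rearrangement using $\sech = 1/\cosh$).

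There is no real obstacle here; this corollary is a direct consequence of the closed-form expressions, and the only thing to verify is that the denominators are identical (which they are by inspection). If a more conceptual explanation were desired, one could note that the proof of the $\rpk$-formula used the identity $\cosh(x\sqrt{1-t}) - \sinh(x\sqrt{1-t})/\sqrt{1-t}$ as the key inverted series, and the $\pk$-formula contains the extra factor of $\cosh(x\sqrt{1-t})$ in its numerator precisely because peaks count non-final long runs whereas right peaks count all long runs, so that a permutation with $\rpk(\pi) = k$ and a final long run contributes $k-1$ peaks while one with a final short run contributes $k$ peaks. A combinatorial refinement along these lines could in principle be extracted from the $(1,3)$ entry computation in the proof of the peak theorem, but the algebraic verification is entirely sufficient.
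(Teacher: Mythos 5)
Your verification is correct and takes essentially the same route as the paper, which derives this corollary exactly as you do: by direct comparison of the two closed-form expressions, whose common denominator $\sqrt{1-t}\cosh(x\sqrt{1-t})-\sinh(x\sqrt{1-t})$ makes the ratio $\cosh(x\sqrt{1-t})$ immediate. One caution about your closing aside: the paper explicitly states that no combinatorial explanation is known, and your observation that $\rpk(\pi)$ exceeds $\pk(\pi)$ by the indicator of a long final run does not by itself account for multiplication by $\cosh(x\sqrt{1-t})=\sum_{n\ge 0}(1-t)^n x^{2n}/(2n)!$, whose coefficients are not nonnegative in $t$ --- so that remark should be framed as a heuristic rather than a sketch of a proof.
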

We do not know of a combinatorial explanation for this fact. Note
that some of the coefficients of 
\[
\cosh(x\sqrt{1-t})=\sum_{n=0}^{\infty}(1-t)^{n}\frac{x^{2n}}{(2n)!}
\]
are negative, so there may be some sort of inclusion-exclusion phenomenon
at play. A combinatorial explanation may also have some connection
to alternating permutations, since 
\[
\sech(x\sqrt{1-t})=\sum_{n=0}^{\infty}E_{2n}(t-1)^{n}\frac{x^{2n}}{(2n)!}
\]
and $E_{2n}$ is the number of alternating permutations of length
$2n$.

\subsection{Counting permutations by double ascents and variations}

Peaks (respectively, valleys) indicate positions in a permutation
where we have a letter that is larger than (respectively, smaller
than) the two surrounding letters, so it is natural to consider the
case when a letter is larger than one surrounding letter but smaller
than the other. Given a permutation $\pi$, we say that $i$ is a
\textit{double ascent} of $\pi$ if $\pi_{i-1}<\pi_{i}<\pi_{i+1}$
and that $i$ is a \textit{double descent} of $\pi$ if $\pi_{i-1}>\pi_{i}>\pi_{i+1}$.
As with peaks and valleys, the number of double ascents and the number
of double descents are equidistributed on $\mathfrak{S}_{n}$, so
we only need to consider double ascents.
\begin{thm}
Let $\dasc(\pi)$ be the number of double ascents of $\pi$. Then,
\[
P^{\dasc}(t,x)=\frac{ue^{\frac{1}{2}(1-t)x}}{u\cosh(\frac{1}{2}ux)-(1+t)\sinh(\frac{1}{2}ux)}
\]
where $u=\sqrt{(t+3)(t-1)}$.
\end{thm}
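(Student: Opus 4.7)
The plan rests on a run-length identity for $\dasc$: a position $i$ with $2 \leq i \leq n-1$ is a double ascent of $\pi$ precisely when $\pi_{i-1}, \pi_i, \pi_{i+1}$ all lie in a single increasing run of $\pi$, since any position on the boundary of a run has a descent on one side. Consequently, an increasing run of length $k$ contributes exactly $\max(k-2,0)$ double ascents, regardless of whether the run is initial, final, or internal, and $\dasc(\pi) = \sum_i \max(L_i - 2, 0)$ for any $\pi$ with descent composition $(L_1,\dots,L_k)$. Since no distinction among initial, final, and internal runs is needed, the original run theorem (Theorem~\ref{t-runs}) suffices and no run network is required.

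To exploit this, I would assign weights $w_1 = 1$ and $w_k = t^{k-2}$ for $k \geq 2$, so that a permutation $\pi$ with descent composition $L$ receives weight $w_L = t^{\dasc(\pi)}$. The weight series is
\[
\sum_{k=0}^{\infty} w_k x^k = 1 + x + \frac{x^2}{1-tx} = \frac{1 + (1-t)x + (1-t)x^2}{1-tx},
\]
so Theorem~\ref{t-runs} yields $\sum_{n \geq 0} v_n x^n = (1-tx)/(1 + (1-t)x + (1-t)x^2)$. Applying the homomorphism $\Phi$ to both sides of the run theorem identity then gives $P^{\dasc}(t,x) = V(x)^{-1}$, where $V(x) \coloneqq \sum_n v_n x^n / n!$.

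The remaining step is to put $V(x)$ into closed form. Writing $1 + (1-t)x + (1-t)x^2 = (1 - r_+ x)(1 - r_- x)$, one finds $r_\pm = (t-1 \pm u)/2$ with $u = \sqrt{(t+3)(t-1)} = r_+ - r_-$. Partial fractions then express $v_n$ as a linear combination of $r_+^n$ and $r_-^n$, so $V(x)$ is a linear combination of $e^{r_+ x}$ and $e^{r_- x}$. Using $e^{r_\pm x} = e^{(t-1)x/2}\, e^{\pm ux/2}$ and regrouping into hyperbolic functions will yield
\[
V(x) = \frac{e^{(t-1)x/2}}{u}\bigl(u\cosh(ux/2) - (1+t)\sinh(ux/2)\bigr),
\]
whose reciprocal is the stated formula. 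The only real obstacle is the partial-fraction bookkeeping---in particular, verifying that the constants $r_\pm - t$ (arising from the numerator $1 - tx$) combine to produce precisely the coefficient $(1+t)$ of $\sinh(ux/2)$ in the denominator of $V(x)$.
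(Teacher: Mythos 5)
Your proposal is correct and matches the paper's proof essentially step for step: the same observation that every increasing run of length $k$ contributes $\max(k-2,0)$ double ascents regardless of position, the same weights $w_k = t^{k-2}$ in the original run theorem, the same rational function $(1-tx)/(1+(1-t)(1+x)x)$, and the same hyperbolic closed form for its exponential generating function. The partial-fraction step you flag does work out as claimed: with $r_\pm = (t-1\pm u)/2$ one gets coefficients $\bigl(u \mp (1+t)\bigr)/(2u)$ on $e^{r_\pm x}$, which regroup exactly to $e^{-(1-t)x/2}\bigl(\cosh(ux/2) - \tfrac{1+t}{u}\sinh(ux/2)\bigr)$.
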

Below are the first ten $\dasc$-polynomials; see also its OEIS entry
\cite[A162975]{oeis}.

\begin{center}
\begin{tabular}{c|c|c|c}
$n$ & $P_{n}^{\dasc}(t)$ & $n$ & $P_{n}^{\dasc}(t)$\tabularnewline
\hline 
0 & $1$ & 5 & $70+41t+8t^{2}+t^{3}$\tabularnewline
1 & $1$ & 6 & $349+274t+86t^{2}+10t^{3}+t^{4}$\tabularnewline
2 & $2$ & 7 & $2017+2040t+803t^{2}+167t^{3}+12t^{4}+t^{5}$\tabularnewline
3 & $5+t$ & 8 & $13358+16346t+8221t^{2}+2064t^{3}+316t^{4}+14t^{5}+t^{6}$\tabularnewline
4 & $17+6t+t^{2}$ & 9 & $99377+143571t+86214t^{2}+28143t^{3}+4961t^{4}+597t^{5}+16t^{6}+t^{7}$\tabularnewline
\end{tabular}
\par\end{center}
\begin{proof}
It is clear that short increasing runs contribute no double ascents,
and long increasing runs of length $k\geq2$ contribute $k-2$ double
ascents. Thus, we set $w_{k}=t^{k-2}$ for all $k\neq1$ and apply
the original run theorem to obtain 
\[
\left(1+x+\frac{x^{2}}{1-tx}\right)^{-1}=\frac{1-tx}{1+(1-t)(1+x)x},
\]
whose coefficients have exponential generating function 
\[
e^{-\frac{1}{2}(1-t)x}\left(\cosh\Big(\frac{1}{2}ux\Big)-\frac{(1+t)}{u}\sinh\bigg(\frac{1}{2}ux\bigg)\right)
\]
where $u=\sqrt{(t+3)(t-1)}$. Then taking the reciprocal gives us
\begin{align*}
P^{\dasc}(t,x) & =\left(e^{-\frac{1}{2}(1-t)x}\left(\cosh\Big(\frac{1}{2}ux\Big)-\frac{(1+t)}{u}\sinh\Big(\frac{1}{2}ux\Big)\right)\right)^{-1}\\
 & =\frac{ue^{\frac{1}{2}(1-t)x}}{u\cosh(\frac{1}{2}ux)-(1+t)\sinh(\frac{1}{2}ux)}.
\end{align*}

\end{proof}
Now let us consider the variations that we can define for double ascents
and double descents as we did for peaks and valleys. We say that $i$
is a \textit{left double ascent} of a permutation $\pi=\pi_{1}\pi_{2}\cdots\pi_{n}$
if $i$ is a double ascent of $\pi$ or if $i=1$ and $\pi_{1}<\pi_{2}$;
we say that $i$ is a \textit{right double ascent} of $\pi$ if $i$
is a double ascent of $\pi$ or if $i=n$ and $\pi_{n-1}<\pi_{n}$;
and we say that $i$ is a \textit{left-right double ascent} of $\pi$
if $i$ is a left double ascent or a right double ascent of $\pi$,
or if $\pi=1$ and $i=1$. \textit{Left double descents}, \textit{right
double descents}, and \textit{left-right double descents} are defined
analogously. It is evident from taking reverses and complements that
we only need to consider right double ascents and left-right double
ascents.
\begin{thm}
Let $\rdasc(\pi)$ and $\lrdasc(\pi)$ be the number of right double
ascents and left-right double ascents of $\pi$, respectively. Then,
\[
P^{\rdasc}(t,x)=\frac{u\cosh(\frac{1}{2}ux)+(1-t)\sinh(\frac{1}{2}ux)}{u\cosh(\frac{1}{2}ux)-(1+t)\sinh(\frac{1}{2}ux)}
\]
and 
\[
P^{\lrdasc}(t,x)=\frac{ue^{-\frac{1}{2}(1-t)x}}{u\cosh(\frac{1}{2}ux)-(1+t)\sinh(\frac{1}{2}ux)}
\]
where $u=\sqrt{(t+3)(t-1)}$.
\end{thm}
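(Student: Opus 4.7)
The plan is to apply the generalized run theorem (Theorem \ref{t-runsm}) to the $3$-vertex run network $(G,P)$ from the start of Section 4.1, with weights chosen so that each increasing run contributes $t$ raised to its share of the statistic rdasc (respectively lrdasc), and then to add a correction for the fully increasing permutations (which give rise to no walk from vertex $1$ to vertex $3$, since every such walk has length at least two).

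To determine the weights I would proceed run by run. An increasing run of length $k$ contains $\max(0,k-2)$ interior double ascents. For rdasc, the final run contributes an extra right double ascent from position $n$ whenever $k\geq 2$; for lrdasc, the same boundary bonus applies to both the initial and the terminal runs. Accordingly, setting
\[
w_k^{\mathrm{mid}} = \begin{cases}1, & k=1,\\ t^{k-2}, & k\geq 2,\end{cases}\qquad
w_k^{\mathrm{bdry}} = \begin{cases}1, & k=1,\\ t^{k-1}, & k\geq 2,\end{cases}
\]
with respective ordinary generating functions $x+x^2/(1-tx)$ and $x+tx^2/(1-tx)$, I would take $w_k^{(1,2)}=w_k^{(2,2)}=w_k^{\mathrm{mid}}$ and $w_k^{(2,3)}=w_k^{\mathrm{bdry}}$ for rdasc, and $w_k^{(1,2)}=w_k^{(2,3)}=w_k^{\mathrm{bdry}}$, $w_k^{(2,2)}=w_k^{\mathrm{mid}}$ for lrdasc.

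The mechanical part then proceeds exactly as in the proof of $P^{\pk}(t,x)$: assemble $I_3+W(x)$, invert, apply $\Phi$ entrywise to convert each entry to an exponential generating function, invert that matrix, and read off the $(1,3)$ entry. The correction for increasing permutations is computed directly: for rdasc it is $(t-1+e^{tx})/t$, reflecting that an increasing $n$-permutation has $\max(0,n-1)$ right double ascents; for lrdasc it is $e^{tx}$, reflecting that such a permutation has $n$ left-right double ascents for $n\geq 1$ (including the $n=1$ convention) and $0$ for $n=0$. The main obstacle is algebraic bulk: the $3\times 3$ inversion produces sizable combinations of $\cosh(\tfrac{1}{2}ux)$, $\sinh(\tfrac{1}{2}ux)$, and $e^{(1-t)x/2}$, and their collapse into the compact closed forms stated in the theorem is best handled in a computer algebra system such as Maple, as was done for $B_1(x)$ and $B_2(x)$ in Section 3.2.
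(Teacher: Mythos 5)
Your proposal is correct and follows essentially the same route as the paper's proof: the identical $3$-vertex run network with weights $t^{k-2}$ on interior long runs and $t^{k-1}$ on the final (respectively, initial and final) runs, your $w_k^{\mathrm{bdry}}$ coinciding with the paper's $w_k^{(2,3)}=t^{k-1}$ for all $k$ since $t^0=1$, followed by the same matrix inversion, entrywise application of $\Phi$, and inversion again. Your corrections for the increasing permutations also match the paper's exactly, since $(t-1+e^{tx})/t=(e^{tx}-1)/t+1$ for $\rdasc$ and $e^{tx}$ for $\lrdasc$.
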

Below are the first ten $\rdasc$- and $\lrdasc$-polynomials. There
is an OEIS entry \cite[A162976]{oeis} for the coefficients of the
$\rdasc$-polynomials, but there does not seem to be one for the $\lrdasc$-polynomials.

\begin{center}
\begin{tabular}{c|c}
$n$ & $P_{n}^{\rdasc}(t)$\tabularnewline
\hline 
0 & $1$\tabularnewline
1 & $1$\tabularnewline
2 & $1+t$\tabularnewline
3 & $3+2t+t^{2}$\tabularnewline
4 & $9+11t+3t^{2}+t^{3}$\tabularnewline
5 & $39+48t+28t^{2}+4t^{3}+t^{4}$\tabularnewline
6 & $189+297t+166t^{2}+62t^{3}+5t^{4}+t^{5}$\tabularnewline
7 & $1107+1902t+1419t^{2}+476t^{3}+129t^{4}+6t^{5}+t^{6}$\tabularnewline
8 & $7281+14391t+11637t^{2}+5507t^{3}+1235t^{4}+261t^{5}+7t^{6}+t^{7}$\tabularnewline
9 & $54351+118044t+111438t^{2}+56400t^{3}+19096t^{4}+3020t^{5}+522t^{6}+8t^{7}+t^{8}$\tabularnewline
\multicolumn{1}{c}{} & \tabularnewline
$n$ & $P_{n}^{\lrdasc}(t)$\tabularnewline
\hline 
0 & $1$\tabularnewline
1 & $t$\tabularnewline
2 & $1+t^{2}$\tabularnewline
3 & $1+4t+t^{3}$\tabularnewline
4 & $6+6t+11t^{2}+t^{4}$\tabularnewline
5 & $19+51t+23t^{2}+26t^{3}+t^{5}$\tabularnewline
6 & $109+212t+269t^{2}+72t^{3}+57t^{4}+t^{6}$\tabularnewline
7 & $588+1571t+1419t^{2}+1140t^{3}+201t^{4}+120t^{5}+t^{7}$\tabularnewline
8 & $4033+10470t+13343t^{2}+7432t^{3}+4272t^{4}+522t^{5}+247t^{6}+t^{8}$\tabularnewline
9 & $29485+87672t+107853t^{2}+87552t^{3}+33683t^{4}+14841t^{5}+1291t^{6}+502t^{7}+t^{9}$\tabularnewline
\end{tabular}
\par\end{center}
\begin{proof}
As before, non-final short increasing runs contribute no right double
ascents, and non-final long increasing runs of length $k\geq2$ contribute
$k-2$ right double ascents. Moreover, if the final increasing run
is of length $k$, then it contributes $k-1$ right double ascents.
So, we take $w_{k}^{(1,2)}=w_{k}^{(2,2)}=t^{k-2}$ for all $k\neq1$
and $w_{k}^{(2,3)}=t^{k-1}$ for all $k$ in the same run network
$(G,P)$ defined earlier, and applying Theorem \ref{t-runsm} gives
\begin{eqnarray*}
\left(I_{3}+\begin{bmatrix}0 & x+\frac{x^{2}}{1-tx} & 0\\
0 & 1+x+\frac{x^{2}}{1-tx} & \frac{x}{1-tx}\\
0 & 0 & 0
\end{bmatrix}\right)^{-1} & =\begin{bmatrix}1 & -\frac{(1+(1-t)x)x}{1+x(1-t)(1+x)} & \frac{(1+(1-t)x)x^{2}}{(1+(1-t)x^{2})(1-tx)}\\
0 & \frac{1-tx}{1+x(1-t)(1+x)} & -\frac{x}{1+x(1-t)(1+x)}\\
0 & 0 & 1
\end{bmatrix} & ,
\end{eqnarray*}
and converting to exponential generating functions gives 
\[
\begin{bmatrix}1 & -1+e^{-\frac{1}{2}(1-t)x}\left(\cosh(\frac{1}{2}ux)-\frac{1+t}{u}\sinh(\frac{1}{2}ux)\right) & -\frac{2}{u}e^{-\frac{1}{2}(1-t)}\sinh(\frac{1}{2}ux)-\frac{1-e^{tx}}{t}\\
0 & e^{-\frac{1}{2}(1-t)x}\left(\cosh(\frac{1}{2}ux)-\frac{1+t}{u}\sinh(\frac{1}{2}ux)\right) & -\frac{2}{u}e^{-\frac{1}{2}(1-t)}\sinh(\frac{1}{2}ux)\\
0 & 0 & 1
\end{bmatrix}.
\]
where $u=\sqrt{(t+3)(t-1)}$. We still need to account for the increasing
permutations, and the increasing permutation of length $n$ has $n-1$
right double ascents. So, we take the $(1,3)$ entry to the inverse
of the above matrix and add to it $(e^{tx}-1)/t+1$ to obtain 
\[
P^{\rdasc}(t,x)=\frac{u\cosh(\frac{1}{2}ux)+(1-t)\sinh(\frac{1}{2}ux)}{u\cosh(\frac{1}{2}ux)-(1+t)\sinh(\frac{1}{2}ux)}.
\]
The computation for left-right double ascents is similar, but we have
to adjust the weights for the initial increasing run. If the initial
increasing run is of length $k$, then it contributes $k-1$ left-right
double ascents; hence, we take $w_{k}^{(2,2)}=t^{k-2}$ for all $k\neq1$
and $w_{k}^{(1,2)}=w_{k}^{(2,3)}=t^{k-1}$ for all $k$. Then the
computation proceeds in the same way, and we add $e^{tx}$ at the
end because the increasing permutation of length $n$ has $n$ left-right
double ascents.
\end{proof}
Comparing our expressions for $P^{\dasc}(t,x)$ and $P^{\lrdasc}(t,x)$
gives the following identity:
\begin{cor}
The bivariate generating functions $P^{\dasc}(t,x)$ and $P^{\lrdasc}(t,x)$
for the number of double ascents and the number of left-right double
ascents, respectively, satisfy
\[
P^{\dasc}(t,x)=e^{(1-t)x}P^{\lrdasc}(t,x).
\]

\end{cor}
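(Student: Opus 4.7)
The plan is simply to verify the identity directly from the two closed forms for $P^{\dasc}(t,x)$ and $P^{\lrdasc}(t,x)$ that were just established in the preceding theorem. Writing $u = \sqrt{(t+3)(t-1)}$ as before, both generating functions share the common denominator $u\cosh(\tfrac{1}{2}ux) - (1+t)\sinh(\tfrac{1}{2}ux)$, and their numerators are $u\,e^{(1-t)x/2}$ and $u\,e^{-(1-t)x/2}$ respectively. Dividing the first expression by the second therefore leaves only $e^{(1-t)x/2}/e^{-(1-t)x/2} = e^{(1-t)x}$, which is exactly the claimed identity. Since the preceding theorem already did all the substantive work, the proof reduces to a one-line comparison of numerators and a confirmation (by inspection) that the denominators agree.

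Consequently there is no real obstacle to this approach; the only question is whether to supplement the algebraic check with a combinatorial proof. For $n \geq 2$ the difference $\lrdasc(\pi) - \dasc(\pi)$ equals $[\pi_1 < \pi_2] + [\pi_{n-1} < \pi_n]$ (with an extra $1$ contributed by the permutation of length one), so a bijective argument would need to pair each permutation weighted by $t^{\dasc}$ with a permutation weighted by $t^{\lrdasc}$ together with a ``boundary'' contribution generating $e^{(1-t)x}$. The appearance of $(1-t)^k = \sum_j \binom{k}{j}(-t)^j$ with alternating signs suggests that any such combinatorial explanation would have to take the form of an inclusion--exclusion or sign-reversing involution rather than a clean bijection, and I would not pursue this here in view of how immediate the algebraic verification is.
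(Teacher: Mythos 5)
Your verification is correct and matches the paper exactly: the paper likewise obtains this corollary by directly comparing the two closed forms, whose shared denominator $u\cosh(\tfrac{1}{2}ux)-(1+t)\sinh(\tfrac{1}{2}ux)$ cancels, leaving the ratio $e^{\frac{1}{2}(1-t)x}/e^{-\frac{1}{2}(1-t)x}=e^{(1-t)x}$. Your closing remark is also consistent with the paper, which states that no combinatorial proof is known.
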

We do not know of a combinatorial proof.

\subsection{Counting permutations by biruns and up-down runs}

The number of peaks and valleys in permutations are also closely related
to another statistic that we call the number of \textit{biruns}, which
we define to be maximal consecutive subsequences of length at least
two containing no descents or no ascents, that is, long increasing
runs and ``long decreasing runs''.%
\footnote{Biruns are more commonly called \textit{alternating runs}, but since
the term ``alternating run'' is used for a different concept in this
paper, we use the term ``birun'' which was suggested by Stanley \cite[Section 4]{Stanley2008}.%
} For example, the biruns of $\pi=51378624$ are $51$, $1378$, $862$,
and $24$. We also define an \textit{up-down run} in a permutation
to be an initial short run or a birun. So the up-down runs of $\pi=51378624$
are $5$, $51$, $1378$, $862$, and $24$. 

The notion of biruns has been widely studied in the literature (see,
e.g., \cite[Chapter 1]{Bona2012}), and are closely to related to
the Eulerian polynomials and longest alternating subsequences of permutations.
A connection with the Eulerian polynomials is exemplified by the identity
\[
P_{n}^{\br}(t)=\left(\frac{1+t}{2}\right)^{n-1}(1+w)^{n+1}A_{n}\bigg(\frac{1-w}{1+w}\bigg),\mbox{ for }n\geq2,
\]
where $w=\sqrt{\frac{1-t}{1+t}}$ and $\br(\pi)$ is the number of
biruns of $\pi$ \cite[p.\ 30]{Bona2012}.

We say that a subsequence $\pi_{i_{1}}\cdots\pi_{i_{k}}$ of a permutation
$\pi$ is \textit{alternating} if $\pi_{i_{1}}\cdots\pi_{i_{k}}$
is an alternating permutation. Let $\as(\pi)$ be the length of the
longest alternating subsequence of a permutation $\pi$. Then, for
example, the $n$-permutations $\pi$ with $\as(\pi)=n$ are the length
$n$ alternating permutations. The study of longest alternating subsequences
of permutations was initiated by Stanley \cite{Stanley2008}, who
deduced the bivariate generating function
\begin{equation}
P^{\as}(t,x)=(1-t)\frac{1+v+2te^{vx}+(1-v)e^{2vx}}{1+v-t^{2}+(1-v-t^{2})e^{2vx}},\label{e-as}
\end{equation}
where $v=\sqrt{1-t^{2}}$, and gave the identity

\[
P_{n}^{\as}(t)=\left(\frac{1+t}{2}\right)P_{n}^{\br}(t),\mbox{ for }n\geq2.
\]

We consider up-down runs because the number of up-down runs in a permutation
is equal to the length of its longest alternating subsequence; an
alternating subsequence is obtained by taking the last letter of each
up-down run, and it is easy to see that this is indeed a longest alternating
subsequence. For example, the up-down runs of $\pi=51378624$ are
$5$, $51$, $1378$, $862$, and $24$, so $51824$ is a longest
alternating subsequence of $\pi$, which has length equal to the number
of up-down runs of $\pi$. 

Thus, the study of up-down runs is essentially the study of longest
alternating subsequences through a different lens. In particular,
the generating function $P^{\udr}(t,x)$ that we give below is equivalent
to (\ref{e-as}). Other properties of the $\as$ statistic can be
determined by studying the $\udr$ statistic, and vice versa.
\begin{thm}
Let $\br(\pi)$ and $\udr(\pi)$ be the number of biruns and the number
of up-down runs of $\pi$, respectively. Then, \textup{
\[
P^{\br}(t,x)=\frac{v}{(1+t)^{2}}\cdot\frac{2t+(1+x+t^{2}(1-x))\cosh(vx)-v(1+x)\sinh(vx)}{v\cosh(vx)-\sinh(vx)}
\]
and 
\[
P^{\udr}(t,x)=\frac{v}{1+t}\cdot\frac{t+\cosh(vx)-v\sinh(vx)}{v\cosh(vx)-\sinh(vx)}
\]
where $v=\sqrt{1-t^{2}}$.}
\end{thm}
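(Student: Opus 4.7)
The plan is to apply the generalized run theorem to the same 3-vertex run network $(G,P)$ used throughout this section, assigning weights that encode each statistic via the increasing run decomposition. For any permutation $\pi$ of length $n\geq 2$ we have $\br(\pi)=\pk(\pi)+(\text{number of valleys of }\pi)+1$, because consecutive biruns meet at peaks or at valleys, so the number of biruns is one more than the total number of peaks and valleys. Since peaks correspond to non-final long increasing runs and valleys to non-initial long increasing runs, this statistic is entirely determined by the data that $(G,P)$ tracks.

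Accordingly, I would set $w_k^{(1,2)}=t$, $w_k^{(2,2)}=t^{2}$, and $w_k^{(2,3)}=t$ for each $k\geq 2$, while $w_1^{(i,j)}=1$ on every arc: an initial long run contributes $t$ for its terminal peak, an interior long run contributes $t^{2}$ for its initial valley and terminal peak, a final long run contributes $t$ for its initial valley, and every short run contributes $1$. Applying Theorem \ref{t-runsm} with weight-generating functions $x+\frac{tx^{2}}{1-x}$ on the $(1,2)$ and $(2,3)$ arcs and $x+\frac{t^{2}x^{2}}{1-x}$ on the $(2,2)$ loop, inverting the resulting $3\times 3$ matrix, converting ordinary generating functions to exponential generating functions via $\Phi$, and extracting the $(1,3)$ entry yields the exponential generating function for non-increasing permutations weighted by $t^{\pk(\pi)+(\#\,\text{valleys})}$. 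Multiplying by $t$ (for the $+1$) and adding $1+x+t(e^{x}-1-x)$ to account for the empty permutation and the single-letter permutation (each with zero biruns) together with the increasing permutations of length $\geq 2$ (each with exactly one birun) should produce $P^{\br}(t,x)$.

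The up-down run case requires only a small modification. For $n\geq 1$ one has $\udr(\pi)=\br(\pi)+[L_{1}(\pi)=1]$, where $L_{1}(\pi)$ denotes the length of the initial increasing run, since an initial short run is an extra up-down run beyond the biruns. In the run network this condition is precisely isolated by the $(1,2)$ arc labeled with $1$, so I would change $w_{1}^{(1,2)}$ from $1$ to $t$ and leave every other weight as in the birun computation. Running the same procedure and adjusting the correction to $1+tx+t(e^{x}-1-x)$ (the single-letter permutation now contributes one up-down run) then yields $P^{\udr}(t,x)$.

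The main obstacle I anticipate is the algebraic simplification at the end: after inverting the $3\times 3$ rational matrix and applying $\Phi$, the $(1,3)$ entries are somewhat unwieldy expressions in $\sinh(x\sqrt{1-t^{2}})$ and $\cosh(x\sqrt{1-t^{2}})$, and recognizing them as the compact closed forms given in the theorem (with $v=\sqrt{1-t^{2}}$) requires nontrivial manipulation. Following the paper's earlier practice, I would carry out this step with a computer algebra system such as Maple.
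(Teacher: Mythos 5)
Your proposal is correct and follows essentially the same route as the paper: the identity $\br(\pi)=\pk(\pi)+(\#\text{valleys})+1$, the same $3$-vertex run network with weights $w_k^{(1,2)}=w_k^{(2,3)}=t$ and $w_k^{(2,2)}=t^2$ for $k\neq 1$, the change $w_1^{(1,2)}=t$ for up-down runs, and end corrections ($1+x+t(e^x-1-x)$ and $1+tx+t(e^x-1-x)$) that are algebraically identical to the paper's ``add $e^x$, multiply by $t$, add $-tx-t+x+1$'' (respectively ``add $-t+1$''). The paper likewise leaves the final matrix inversion and hyperbolic simplification to routine (machine-assisted) computation, so no gap remains.
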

The first ten $\br$- and $\udr$-polynomials are given below. Also
see their OEIS entries \cite[A059427 and A186370]{oeis}.

\begin{center}
\begin{tabular}{c|c}
$n$ & $P_{n}^{\br}(t)$\tabularnewline
\hline 
0 & $1$\tabularnewline
1 & $1$\tabularnewline
2 & $2t$\tabularnewline
3 & $2t+4t^{2}$\tabularnewline
4 & $2t+12t^{2}+10t^{3}$\tabularnewline
5 & $2t+28t^{2}+58t^{3}+32t^{4}$\tabularnewline
6 & $2t+60t^{2}+236t^{3}+300t^{4}+122t^{5}$\tabularnewline
7 & $2t+124t^{2}+836t^{3}+1852t^{4}+1682t^{5}+544t^{6}$\tabularnewline
8 & $2t+252t^{2}+2766t^{3}+9576t^{4}+14622t^{5}+10332t^{6}+2770t^{7}$\tabularnewline
9 & $2t+508t^{2}+8814t^{3}+45096t^{4}+103326t^{5}+119964t^{6}+69298t^{7}+15872t^{8}$\tabularnewline
\multicolumn{1}{c}{} & \tabularnewline
$n$ & $P_{n}^{\udr}(t)$\tabularnewline
\hline 
0 & $1$\tabularnewline
1 & $t$\tabularnewline
2 & $t+t^{2}$\tabularnewline
3 & $t+3t^{2}+2t^{3}$\tabularnewline
4 & $t+7t^{2}+11t^{3}+5t^{4}$\tabularnewline
5 & $t+15t^{2}+43t^{3}+45t^{4}+16t^{5}$\tabularnewline
6 & $t+31t^{2}+148t^{3}+268t^{4}+211t^{5}+61t^{6}$\tabularnewline
7 & $t+63t^{2}+480t^{3}+1344t^{4}+1767t^{5}+1113t^{6}+272t^{7}$\tabularnewline
8 & $t+127t^{2}+1509t^{3}+6171t^{4}+12099t^{5}+12477t^{6}+6551t^{7}+1385t^{8}$\tabularnewline
9 & $t+255t^{2}+4661t^{3}+26955t^{4}+74211t^{5}+111645t^{6}+94631t^{7}+42585t^{8}+7936t^{9}$\tabularnewline
\end{tabular}
\par\end{center}
\begin{proof}
The number of biruns of a permutation is exactly one more than its
total number of peaks and valleys. To see why this is true, observe
that every non-final long increasing run ends with a peak and that
every non-final long decreasing run ends with a valley, which accounts
for every peak, every valley, and all but the final birun. Thus, in
finding the bivariate generating function for biruns, we can simply
assign weights based on peaks and valleys and make an adjustment at
the end.

Recall that the number of peaks in a permutation is equal to its number
of non-final long increasing runs and that the number of valleys is
equal to its number of non-initial long increasing runs. Hence, using
the run network $(G,P)$ as before, we set $w_{k}^{(1,2)}=w_{k}^{(2,3)}=t$
and $w_{k}^{(2,2)}=t^{2}$ for all $k\neq1$. Then, 
\begin{eqnarray*}
\left(I_{3}+\begin{bmatrix}0 & x+\frac{tx^{2}}{1-x} & 0\\
0 & x+\frac{t^{2}x^{2}}{1-x} & x+\frac{tx^{2}}{1-x}\\
0 & 0 & 0
\end{bmatrix}\right)^{-1} & = & \begin{bmatrix}1 & -\frac{(1-(1-t)x)x}{1-(1-t^{2})x^{2}} & \frac{(1-(1-t)x)^{2}x^{2}}{(1-(1-t^{2})x^{2})(1-x)}\\
0 & \frac{1-x}{1-(1-t^{2})x^{2}} & -\frac{(1-(1-t)x)x}{1-(1-t^{2})x^{2}}\\
0 & 0 & 1
\end{bmatrix},
\end{eqnarray*}
and converting to exponential generating functions gives 
\[
\begin{bmatrix}1 & -\frac{1-\cosh(vx)}{1+t}-\frac{\sinh(vx)}{v} & -1+\left(\frac{1-t}{1+t}\right)x-\frac{2\sinh(vx)}{(1+t)v}+e^{x}\\
0 & \cosh(vx)-\frac{\sinh(vx)}{v} & -\frac{1-\cosh(vx)}{1+t}-\frac{\sinh(vx)}{v}\\
0 & 0 & 1
\end{bmatrix}
\]
where $v=\sqrt{1-t^{2}}$. Finally, we take the $(1,3)$ entry of
the inverse matrix, add $e^{x}$ to account for the increasing permutations,
multiply by $t$, and then add $-tx-t+x+1$. The result is

\[
P^{\br}(t,x)=\frac{v}{(1+t)^{2}}\cdot\frac{2t+(1+x+t^{2}(1-x))\cosh(vx)-v(1+x)\sinh(vx)}{u\cosh(vx)-\sinh(vx)}.
\]

To compute the bivariate generating function $P^{\udr}(t,x)$ for
the number of up-down runs, we use the same weights as before but
also weight initial short runs. That is, we set $w_{k}^{(1,2)}=t$
for all $k$, and set $w_{k}^{(2,2)}=t^{2}$ and $w_{k}^{(2,3)}=t$
for all $k\neq1$. Then the computation is done in the same way, and
at the end we add $e^{x}$, multiply by $t$, and add $-t+1$ to obtain
the desired generating function.
\end{proof}
\textbf{Acknowledgements.} The author would like to thank Ira Gessel---from
whom many important ideas of this paper originated---for numerous
insightful discussions on nearly every aspect of this paper. The results
presented here would not exist without his mentorship. The author
also thanks an anonymous referee, whose helpful comments led to a
much simpler proof of the generalized run theorem.

\bibliographystyle{plain}
\addcontentsline{toc}{section}{\refname}\bibliography{bibliography_firstinitial}

\end{document}